\numberwithin{equation}{section}
\theoremstyle{plain}
\newtheorem{theorem}{Theorem}[section]
\newtheorem{proposition}[theorem]{Proposition}
\newtheorem{lemma}[theorem]{Lemma}
\newtheorem{corollary}[theorem]{Corollary}
\theoremstyle{definition}
\newtheorem{definition}[theorem]{Definition}
\newtheorem{example}[theorem]{Example}
\theoremstyle{remark}
\newtheorem{remark}[theorem]{Remark}
\newcommand{\msf}[1]{\mathsf{#1}}
\newcommand{\mcal}[1]{\mathcal{#1}}
\newcommand{\mbb}[1]{\mathbb{#1}}
\newcommand{\mbf}[1]{\mathbf{#1}}
\newcommand{\mrm}[1]{\mathrm{#1}}
\newcommand{\mfk}[1]{\mathfrak{#1}}
\newcommand{\perf}{\mathrm{perf}}
\newcommand{\cyc}{\mathsf{cyc}}
\newcommand{\Rel}{\mathsf{Rel}}
\DeclareMathOperator{\Aut}{Aut}
\DeclareMathOperator{\hofib}{hofib}
\DeclareMathOperator{\cone}{cone}
\DeclareMathOperator{\CH}{CH}
\DeclareMathOperator{\Spec}{Spec}
\title[Relative $K_0$ and relative cycle class map]{Relative $K_0$ and relative cycle class map}
\author{Ryomei Iwasa}
\address{Graduate School of Mathematical Sciences, the University of Tokyo, 3-8-1 Komaba, Meguro-ku, Tokyo, 153-8914 Japan.}
\email{ryomei@ms.u-tokyo.ac.jp}
\thanks{}
\begin{document}

\begin{abstract}
Let $F\colon \mcal{A}\to\mcal{B}$ be an exact functor between small exact categories.
We study the zeroth homotopy group $K_0(F)$ of the homotopy fiber of the map $K(\mcal{A})\to K(\mcal{B})$ between $K$-theory spectra.
Under the assumption that $F$ is a cofinal and that $\mcal{B}$ is split exact, we give an explicit description of $K_0(F)$ in terms of the triangulated functor $\msf{D}^b(\mcal{A})\to \msf{D}^b(\mcal{B})$ between the derived categories.

We apply it to the pair $(X,D)$ of a scheme $X$ and an affine closed subscheme $D$ of $X$, and get a description of the relative $K_0$-group $K_0(X,D)$ in terms of perfect complexes; it is generated by pairs of two perfect complexes of $X$ together with quasi-isomorphisms along $D$.
This description makes it possible to assign a cycle class in $K_0(X,D)$ to a cycle on $X$ not meeting $D$ in an intuitive way.
When $X$ is a separated regular scheme of finite type over a field and $D$ is an affine effective Cartier divisor on $X$, we prove that the cycle classes induce a surjective group homomorphism from the Chow group with modulus $\CH_*(X|D)$ defined by Binda-Saito to a suitable subquotient of $K_0(X,D)$.
%\today.
\end{abstract}

\maketitle

\tableofcontents

\section{Introduction}

\subsection{}
Let $X$ be a separated regular scheme of finite type over a field.
Then, to every integral closed subscheme $V$ of $X$, we can assign the cycle class $\cyc(V)$ in the Grothendieck group $K_0(X)$ of algebraic vector bundles on $X$.
Grothendieck has shown that the cycle classes induce surjective group homomorphisms from the Chow groups to subquotients of $K_0(X)$
\begin{equation}\label{Gro-cyclemap}
	\cyc\colon \CH_k(X) \twoheadrightarrow F_kK_0(X)/F_{k-1}K_0(X)
\end{equation}
for all $k\ge 0$, cf.\ \cite[Exp 0, App.\ Ch II]{SGA6}.
Here, $F_*$ is the coniveau filtration; $F_kK_0(X)$ is generated by perfect complexes of $X$ whose supports are of dimension $\le k$. 

The current paper constructs a relative version of the cycle class map (\ref{Gro-cyclemap}).
Let $D$ be an effective Cartier divisor on $X$.
We are interested in the relative $K_0$-group $K_0(X,D)$, which is defined to be the zeroth homotopy group of the homotopy fiber of the canonical map $K(X)\to K(D)$ between $K$-theory spectra.
As a cycle theoretical invariant, we use the Chow group with modulus $\CH_*(X|D)$ defined by Binda-Saito \cite{BS17}: It is the group generated by cycles on $X$ which do not meet $D$ divided by a variant of rational equivalence (see \S\ref{Chowmodulus} for details).
Here is the main theorem, which generalizes (\ref{Gro-cyclemap}).
\begin{theorem}[Theorem \ref{thm:cycleclass}, Lemma \ref{lem:coniveau}]\label{mainthm}
Let $X$ be a separated regular scheme of finite type over a field and $D$ an affine effective Cartier divisor on $X$.
Then there exist surjective group homomorphisms
\begin{equation}\label{cyclemapmodulus}
	\cyc\colon \CH_k(X|D) \twoheadrightarrow F_kK_0(X,D)/F_{k-1}K_0(X,D)
\end{equation}
for all $k\ge 0$, where $F_*$ is the ``coniveau filtration'' (Definition \ref{def:coniveau}).
Furthermore, if $D$ has an affine open neighborhood in $X$, then $F_{\dim X}K_0(X,D)=K_0(X,D)$.
\end{theorem}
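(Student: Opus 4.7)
The plan follows Grothendieck's strategy for the absolute case, with the Binda--Saito modulus condition controlling the existence of compatible null-homotopies on $D$ at each step.

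\textbf{Cycle class.} For $V\subset X$ integral closed of dimension $k$ with $V\cap D=\emptyset$, regularity of $X$ makes $\mathcal{O}_V$ a perfect complex on $X$, and the emptiness of the intersection makes its derived restriction $\mathcal{O}_V\otimes^{L}_{\mathcal{O}_X}\mathcal{O}_D$ identically zero. This yields a canonical lift $[\mathcal{O}_V]\in F_kK_0(X,D)$; extending linearly gives a homomorphism from $k$-cycles on $X$ not meeting $D$ into $F_kK_0(X,D)/F_{k-1}K_0(X,D)$.

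\textbf{Rational equivalence with modulus --- the main obstacle.} To factor through $\CH_k(X|D)$, I would take a generator of the modulus rational equivalence: an integral $(k+1)$-dimensional $W\subset X\times\mathbb{P}^1$, proper over $\mathbb{P}^1$, properly intersecting $X\times\{0,\infty\}$, disjoint from $D\times(\mathbb{P}^1\setminus\{\infty\})$, and satisfying the modulus inequality along $D\times\{\infty\}$. The goal is to exhibit $\mathcal{O}_W$ as a class in $K_0(X\times\mathbb{P}^1, D\times\mathbb{P}^1)$ whose specializations at $\{0\}$ and $\{\infty\}$, computed through the exact triangles
\[
\mathcal{O}_W(-\{t\})\to \mathcal{O}_W\to \mathcal{O}_{W_t},\quad t\in\{0,\infty\},
\]
agree modulo $F_{k-1}$. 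Matching the Binda--Saito modulus condition to the concrete existence of a null-homotopy on $D\times\mathbb{P}^1$ at the level of exact categories is where I expect the bulk of the technical work, drawing on the structural results on relative $K_0$ developed earlier in the paper.

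\textbf{Surjectivity.} By construction $F_kK_0(X,D)/F_{k-1}$ is generated by pairs $(E,h)$ with $E$ perfect on $X$ of support dimension $k$ and $h$ a null-homotopy on $D$. Noetherian induction on the support, together with the usual devissage along the cyclic modules $\mathcal{O}_V$, reduces mod $F_{k-1}$ to $E=\mathcal{O}_V$ with $V$ integral of dimension $k$. A general-position perturbation of $V$ (which does not affect the class in the modulus Chow group) lets us assume $V\cap D=\emptyset$, so every generator lies in the image of $\cyc$.

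\textbf{Exhaustion under the affine-neighborhood hypothesis.} It suffices to show that every class of $K_0(X,D)$ admits a representative $(E,h)$ with $E$ a perfect complex on all of $X$: such a class automatically lies in $F_{\dim X}K_0(X,D)$. Classes lifted from $K_0(X)\to K_0(X,D)$ are immediate; the remaining contributions come from the image of $K_1(D)\to K_0(X,D)$, represented by pairs $(P,\alpha)$ with $\alpha\in\Aut(P)$ on $D$. Given an affine open $U\supset D$, such $(P,\alpha)$ lifts to $U$ by affineness and then to $X$ by gluing with a trivial bundle across the Nisnevich square $(U,X\setminus D)$; the resulting $(\widetilde P,\alpha)-(\widetilde P,\mathrm{id})\in K_0(X,D)$ realizes the given class inside $F_{\dim X}K_0(X,D)$.
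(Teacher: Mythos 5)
The central step of the theorem --- that $\cyc$ kills rational equivalence with modulus --- is exactly the step you leave open (``where I expect the bulk of the technical work''), so the proposal as written does not contain a proof. The missing idea is concrete: the modulus condition is an inequality of Cartier divisors on the \emph{normalization} $V^N$ of the correspondence $V\subset X\times\mathbb{P}^1$, so one must pass to $V^N$, work in the relative group $K_0(V^N,q_N^*D)$, and only afterwards descend to $K_0(X,D)$ by a proper transfer $q_{N*}$ (which requires Tor-independence of $D\hookrightarrow X$ and $V^N\to X$, and perfectness of $q_N$ from regularity of $X$). On $V^N$ the two fibers are cut out by $p_N^*j_0,p_N^*j_1\colon p_N^*\mathcal{O}(-1)\to\mathcal{O}_{V^N}$, which differ by multiplication by the rational function $(1-t)/t$; this function is $\equiv 1$ on $\phi^*(X\times\{\infty\})$, hence $\equiv 1$ on $\phi^*(D\times\mathbb{P}^1)=q_N^*D$ \emph{by the modulus inequality}, and that is precisely what makes the class $[(\mathcal{O}_{V^N},(1-t)/t,\mathcal{O}_{V^N})]$ vanish in relative $K_0$. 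Without this (or an equivalent device) the modulus hypothesis is never used. Note also that in the Binda--Saito definition adopted here the specializations are at $0$ and $1$ and the modulus lives at $\infty$; your setup specializes at $0$ and $\infty$, but the fiber over $\infty$ is exactly where $V$ is allowed to meet $D\times\mathbb{P}^1$, so it need not define a cycle in $Z_k(X|D)$ at all.

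Two further steps are off. For surjectivity, your ``general-position perturbation of $V$ \ldots which does not affect the class in the modulus Chow group'' is both unjustified (there is no moving lemma for cycles with modulus, and moving $V$ off $D$ would in any case have to be shown to preserve the class --- which is circular) and unnecessary: with the paper's Definition of $F_k$, a generator of $F_kK_0(X,D)$ is by definition trivialized on an open neighborhood $U$ of $D$ with $\dim(X\setminus U)\le k$, so the localization sequence $G_0(X\setminus U)\to K_0(X,D)\to K_0(U,D)$ produces its class from cycles supported on $X\setminus U$, which automatically avoid $D$; no moving is needed. For the exhaustion statement, writing a class as $(\widetilde P,\alpha,\widetilde P)$ does not by itself place it in $F_{\dim X}K_0(X,D)$: the filtration requires the isomorphism $\alpha$ over $D$ to \emph{lift to an isomorphism over some open neighborhood} of $D$, and this is the actual content to be proved. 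The argument is to represent the class by vector bundles over an affine neighborhood (via the comparison theorem), lift $\alpha$ to a bundle map $\gamma\colon P\to Q$ by projectivity, and use Nakayama plus closedness of the supports of $\ker\gamma$ and $\operatorname{coker}\gamma$ to see that $\gamma$ is an isomorphism on a neighborhood of $D$; your Nisnevich-gluing construction stops short of this.
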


In \cite{BK18}, Binda-Krishna constructed a cycle class map for zero cycles with modulus, namely a map from $\CH_0(X|D)$ to $K_0(X,D)$, for modulus pairs $(X,D)$ with $X$ smooth quasi-projective over a perfect field.
They have also shown that the cycle class map is injective if $X$ is affine and the base filed is algebraically closed.
Also, Binda \cite{Bi18} constructed a cycle class map for higher zero cycles with modulus using a slightly different (stronger) modulus condition.

If $X$ is a smooth quasi-projective scheme over a field, then Grothendieck's Riemann-Roch type formula implies that the cycle class map (\ref{Gro-cyclemap}) is a rational isomorphism.
In a subsequent paper \cite{IK18}, we prove that the relative cycle map (\ref{cyclemapmodulus}) is a rational isomorphism, at least when $X$ is smooth affine.

\subsection{}
Let $X$ be a separated regular noetherian scheme and $V$ an integral closed subscheme of $X$.
Then the coherent sheaf $\mcal{O}_V$ is a perfect complex of $X$, i.e.\ quasi-isomorphic to a bounded complex $E_\bullet$ of algebraic vector bundles on $X$, and the cycle class is given by $\cyc(V):=\sum(-1)^i[E_i]\in K_0(X)$.
Here, it is more natural to consider the group $K^{\mrm{perf}}_0(X)$ generated by perfect complexes of $X$ with the relation $[P]=[P']+[P'']$ for each exact triangle $P'\to P\to P''\to P'[1]$.
It follows from \cite[Exp 1, 6.4]{SGA6} that the canonical map
\begin{equation}\label{K-Kperf}
	K_0(X) \xrightarrow{\simeq} K^{\mrm{perf}}_0(X)
\end{equation}
is an isomorphism.
Under this isomorphism, the cycle class $\cyc(V)$ is just the class of the perfect complex $\mcal{O}_V$ in $K^{\mrm{perf}}_0(X)$.

Now, suppose we are given an affine closed subscheme $D$ of $X$, and we denote the inclusion $D\hookrightarrow X$ by $\iota$.
Then the relative $K_0$-group $K_0(X,D)$ is generated by pairs $(E,E')$ of algebraic vector bundles on $X$ together with isomorphisms $E\vert_D\xrightarrow{\simeq}E'\vert_D$ along $D$ (Theorem \ref{thm:Heller}).
As in the absolute case, perfect complexes are more appropriate to construct cycle classes.
We define $K^{\mrm{perf}}_0(X,D)$ to be the group generated by pairs $(P,P')$ of perfect complexes of $X$ together with isomorphisms $L\iota^*P\xrightarrow{\simeq}L\iota^*P'$ in the derived category of $D$ with suitable relations (Definition \ref{def:K_0(F)tri}).
Then we show that the canonical map
\begin{equation}\label{keyisom}
	K_0(X,D) \xrightarrow{\simeq} K^{\mrm{perf}}_0(X,D)
\end{equation}
is an isomorphism (Theorem \ref{thm:relKscheme}).
This is a generalization of (\ref{K-Kperf}).

Let $V$ be an integral closed subscheme of $X$ which does not meet $D$.
Then $\mcal{O}_V$ is a perfect complex of $X$ and $L\iota^*\mcal{O}_V\simeq 0$.
Hence, the pair $(\mcal{O}_V,0)$ gives an element of $K^{\mrm{perf}}_0(X,D)$, which we denote by $\cyc(V)$.
When $X$ is of finite type over a field and $D$ is a Cartier divisor, we show that $\cyc$ kills the relations that define $\CH_*(X|D)$ and get Theorem \ref{mainthm}.

The hardest part of the above argument is the proof of the isomorphism (\ref{keyisom}).
This isomorphism holds more generally for a certain type of exact functors between small exact categories.
Actually, in large part of this paper, we discuss relative $K$-theory of exact categories and triangulated categories in general.

Here is a brief summary of the contents of this paper.
Let $F\colon \mcal{A}\to\mcal{B}$ be an exact functor between small exact categories.
We define $K_0(F)$ to be the group generated by pairs $(P,Q)$ of two objects in $\mcal{A}$ together with isomorphisms $F(P)\xrightarrow{\simeq}F(Q)$ in $\mcal{B}$ with suitable relations (Definition \ref{def:K_0(F)exact}).
Then, under the assumption that $\mcal{B}$ is split exact and that $F$ is cofinal, $K_0(F)$ is isomorphic to the zeroth homotopy group of the homotopy fiber of the map $K(\mcal{A})\to K(\mcal{B})$ between the $K$-theory spectra (Theorem \ref{thm:Heller}).
This essentially follows from Heller's result in \cite{He65} and we explain it in the first section \S\ref{relKexact}.
The second section \S\ref{relKtri} is the technical heart of this paper.
We define a group $K_0(T)$ for a triangulated functor $T$ between small triangulated categories (Definition \ref{def:K_0(F)tri}) as an analogue of the $K_0$ for an exact functor between small exact categories.
Then, for an exact functor $F\colon\mcal{A}\to\mcal{B}$ between small exact categories with $\mcal{B}$ being split exact, we prove that $K_0(F)$ is isomorphic to the $K_0$ of the triangulated functor $\msf{D}^b(\mcal{A})\to D^b(\mcal{B})$ between the derived categories (Theorem \ref{thm:comparison}).
This is the general assertion of the isomorphism (\ref{keyisom}).
Finally, in the third section \S\ref{relcycle}, we prove Theorem \ref{mainthm}.

\subsection*{Acknowledgement}
I am thankful to Amalendu Krishna for discussions about cycle class maps.
Parts of this paper was written when I was in Hausdorff Research Institute for Mathematics as a participant of the trimester program ``$K$-theory and Related Fields''.
I thank the institute and the organizers for their hospitality.
This work was supported by JSPS KAKENHI Grant Number 16J08843, and by the Program for Leading Graduate Schools, MEXT, Japan.

\section{Relative $K_0$ of exact categories}\label{relKexact}

Let us start from a general construction of a category, which is used throughout this paper.
\begin{definition}\label{def:Rel}
Let $F\colon\mcal{A}\to\mcal{B}$ be a functor of categories.
We define a category $\Rel(F)$:
\begin{itemize}
\item Objects are triples $(P,\alpha,Q)$ with $P,Q\in\mcal{A}$ and $\alpha\colon F(P)\xrightarrow{\simeq}F(Q)$ an isomorphism in $\mcal{B}$.
\item Morphisms from $(P,\alpha,Q)$ to $(P',\alpha',Q')$ are pairs $(f,g)$ of morphisms $f\colon P\to P'$ and $g\colon Q\to Q'$ in $\mcal{A}$ which make the diagram
\[
\xymatrix{
	F(P) \ar[r]^{F(f)} \ar[d]^\alpha & F(P')\ar[d]^{\alpha'} \\
	F(Q) \ar[r]^{F(g)} & F(Q')
}
\]
commutative.
\end{itemize}
\end{definition}

\subsection{Heller's theorem}

Let $F\colon\mcal{A}\to\mcal{B}$ be an exact functor between exact categories.
We call a sequence
\[
\xymatrix@1{
	(P',\alpha',Q') \ar[r]^-{(f,g)} & (P,\alpha,Q) \ar[r]^-{(f',g')} & (P'',\alpha'',Q'')
}
\]
in $\Rel(F)$ \textit{exact} if $P'\xrightarrow{f} P\xrightarrow{f'} P''$ and $Q'\xrightarrow{g} Q\xrightarrow{g'} Q''$ are exact sequences in $\mcal{A}$.
Under this definition, $\Rel(F)$ is an exact category.

\begin{definition}\label{def:K_0(F)exact}
Let $F\colon\mcal{A}\to\mcal{B}$ be an exact functor between small exact categories.
We define $K_0(F)$ to be the group with the generators $[X]$, one for each $X\in\Rel(F)$, and with the following relations:
\begin{enumerate}[(a)]
\item For each exact sequence $X'\rightarrowtail X\twoheadrightarrow X''$ in $\Rel(F)$,
\[
	[X] = [X'] + [X''].
\]
\item For each pair $((P,\alpha,Q),(Q,\beta,R))$ of objects in $\Rel(F)$, 
\[
	[(P,\alpha,Q)] + [(Q,\beta,R)] = [(P,\beta\alpha,R)].
\]
\end{enumerate}
\end{definition}

\begin{definition}\label{def:split}
An exact category is \textit{split exact} if every exact sequence is split exact.
\end{definition}

\begin{definition}\label{def:cofinal}
An additive functor $F\colon\mcal{A}\to\mcal{B}$ between additive category is \textit{cofinal} if for every $B\in\mcal{B}$ there exists $B'\in\mcal{B}$ and $A\in\mcal{A}$ such that $F(A)\simeq B\oplus B'$.
\end{definition}

For a small exact category $\mcal{A}$, we denote Quillen's $K$-theory spectrum by $K(\mcal{A})$.
Here is a reinterpretation of Heller's result in \cite{He65}.
\begin{theorem}\label{thm:Heller}
Let $F\colon\mcal{A}\to\mcal{B}$ be an exact functor between small exact categories.
Suppose that $\mcal{B}$ is split exact and that $F$ is cofinal.
Then there exists a natural isomorphism of groups
\[
	K_0(F) \simeq \pi_0\hofib(K(\mcal{A})\xrightarrow{F}K(\mcal{B})).
\]
\end{theorem}
We give a proof in \S\ref{pf:Heller}.

\subsection{Basic properties}

Here, we collect some basic properties of relative $K_0$-groups of exact categories (Definition \ref{def:K_0(F)exact}), whose proof is immediate from the definition. 
\begin{lemma}\label{lem:K_0(F)exact}
Let $F\colon\mcal{A}\to\mcal{B}$ be an exact functor between small exact categories.
Then:
\begin{enumerate}[label={\upshape(\roman*)}]
\item $[0]=0$ in $K_0(F)$. If $X,Y\in\Rel(F)$ are isomorphic, then $[X]=[Y]$ in $K_0(F)$.
\item If $\gamma\colon P\xrightarrow{\simeq}Q$ is an isomorphism in $\mcal{A}$, then $[(P,F(\gamma),Q)]=0$ in $K_0(F)$.
\item For every $(P,\alpha,Q)\in\Rel(F)$, $[(P,\alpha,Q)] + [(Q,\alpha^{-1},P)] = 0$ in $K_0(F)$.
\item Every element of $K_0(F)$ has the form $[X]$ for some $X\in\Rel(F)$.
\end{enumerate}
\end{lemma}

\begin{definition}\label{def:looparrow}
Let $F\colon\mcal{A}\to\mcal{B}$ be an exact functor between small exact categories.
Given two objects $(P,\alpha,Q),(P',\alpha',Q')$ in $\Rel(F)$, we write 
\[
	(P',\alpha',Q')\looparrowright(P,\alpha,Q)
\]
if there exist $N\in\mcal{A}$ and a commutator $\gamma$ in $\Aut(F(Q))$ which fit into an exact sequence
\[
\xymatrix@1{
	(P',\alpha',Q') \ar@{ >->}[r] & (P,\gamma\alpha,Q) \ar@{->>}[r] & (N,1,N).
}
\]
\end{definition}

\begin{lemma}\label{lem:looparrow}
Let $F\colon\mcal{A}\to\mcal{B}$ be an exact functor between small exact categories and $X,X'\in\Rel(F)$.
If $X'\looparrowright X$, then $[X']=[X]$ in $K_0(F)$.
\end{lemma}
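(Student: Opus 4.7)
The plan is to reduce the equality $[X']=[X]$ to the vanishing of $[(Q,\gamma,Q)]$ for a commutator $\gamma\in\Aut(F(Q))$, and then to deduce the latter from the fact that $K_0(F)$ is abelian.

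First, I would apply relation (a) to the given exact sequence in $\Rel(F)$ to get
\[
	[(P,\gamma\alpha,Q)] = [X'] + [(N,1,N)].
\]
Since $1_N = F(1_N)$ is the image of the identity of $N$, Lemma \ref{lem:K_0(F)exact}(ii) gives $[(N,1,N)]=0$, so $[X'] = [(P,\gamma\alpha,Q)]$. Next, I would apply relation (b) to the composable pair $(P,\alpha,Q),(Q,\gamma,Q)$ to rewrite
\[
	[(P,\gamma\alpha,Q)] = [(P,\alpha,Q)] + [(Q,\gamma,Q)] = [X] + [(Q,\gamma,Q)].
\]
Combining the two equalities, the lemma reduces to showing that $[(Q,\gamma,Q)]=0$ whenever $\gamma$ is a commutator in $\Aut(F(Q))$.

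For this final step, I would observe that relation (b) implies that the assignment $\beta\mapsto [(Q,\beta,Q)]$ defines a group homomorphism
\[
	\Aut(F(Q))^{\mrm{op}} \longrightarrow K_0(F), \qquad \beta \longmapsto [(Q,\beta,Q)],
\]
since $[(Q,\beta,Q)] + [(Q,\beta',Q)] = [(Q,\beta'\beta,Q)]$ by (b), and the identity maps to $0$ by Lemma \ref{lem:K_0(F)exact}(ii). Because the target $K_0(F)$ is abelian, this homomorphism factors through the abelianization of $\Aut(F(Q))^{\mrm{op}}$, and therefore sends every commutator to zero. In particular $[(Q,\gamma,Q)]=0$, which completes the argument.

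The steps are all essentially formal manipulations with the defining relations of $K_0(F)$; the only conceptual point is recognizing that the twist by a commutator in the middle term $(P,\gamma\alpha,Q)$ is invisible in $K_0(F)$, which is exactly why the definition of $\looparrowright$ allows $\gamma$ to be an arbitrary commutator rather than the identity. I do not foresee a real obstacle: the proof is a short chain of applications of (a), (b), and Lemma \ref{lem:K_0(F)exact}.
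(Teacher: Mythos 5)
Your proof is correct and is exactly the argument the paper leaves implicit (the lemma is stated without proof as an immediate consequence of the definitions): relation (a) plus Lemma \ref{lem:K_0(F)exact}(ii) absorbs the $(N,1,N)$ term, relation (b) isolates $[(Q,\gamma,Q)]$, and the homomorphism $\beta\mapsto[(Q,\beta,Q)]$ into the abelian group $K_0(F)$ kills commutators. No gaps.
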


\begin{remark}\label{rem:looparrow}
In fact, if $\mcal{B}$ is split exact, then the converse holds, i.e.\ all relations of $K_0(F)$ are generated by $\looparrowright$ (and $\looparrowleft$).
\end{remark}

\subsection{Elementary transformations}

Let $\mcal{C}$ be an additive category.
Let $P,Q\in\mcal{C}$.
Suppose that $P$ and $Q$ have the forms $P=P_1\oplus P_2$ and $Q=Q_1\oplus Q_2$.
Then a homomorphism from $P$ to $Q$ can be expressed by a matrix
\[
	\begin{pmatrix} a_{11} & a_{12} \\ a_{21} & a_{22}\end{pmatrix}
	\colon P_1\oplus P_2\to Q_1\oplus Q_2,
\]
where $a_{ij}$ is a morphism $P_j\to Q_i$ in $\mcal{C}$.

\begin{definition}\label{def:elementary}
An endomorphism $\alpha$ of $P\in\mcal{C}$ is an \textit{elementary transformation} if there exists an embedding $\mcal{C}\hookrightarrow\overline{\mcal{C}}$ of additive categories and $\alpha$ is isomorphic to an endomorphism of $P_1\oplus P_2$ of the form
\[
	\begin{pmatrix} 1 & a \\ 0 & 1\end{pmatrix}
\]
for some $P_1,P_2\in\overline{\mcal{C}}$ and $a\colon P_2\to P_1$.
We denote by $E(P)$ the subgroup of $\Aut(P)$ generated by elementary transformations.
\end{definition}

\begin{lemma}\label{lem:elementary}
Let $P\in\mcal{C}$ and $\alpha\in E(P)$.
Then 
\[
	\alpha\oplus 1\colon P\oplus P \to P\oplus P
\]
is a commutator of $\Aut(P\oplus P)$.
\end{lemma}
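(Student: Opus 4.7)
The plan is to reduce to the case where $\alpha$ is a single elementary transformation and then to exhibit $\alpha \oplus 1_P$ explicitly as a commutator via the classical matrix identity $[e_{13}(1_{P_1}), e_{32}(a)] = e_{12}(a)$ carried out inside a four-summand decomposition of $P \oplus P$.

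First I would observe that if $\alpha = \alpha_1 \cdots \alpha_n$ is any product of elementary transformations, then $\alpha \oplus 1_P = \prod_i (\alpha_i \oplus 1_P)$, so once each factor $\alpha_i \oplus 1_P$ has been presented as a single commutator, $\alpha \oplus 1_P$ lies in the commutator subgroup $[\Aut(P \oplus P), \Aut(P \oplus P)]$---which is the sense in which the conclusion is subsequently applied in Definition \ref{def:looparrow}. Thus it suffices to assume $\alpha$ itself is elementary: there are an embedding $\mcal{C} \hookrightarrow \overline{\mcal{C}}$, a decomposition $P \cong P_1 \oplus P_2$ in $\overline{\mcal{C}}$, and a morphism $a \colon P_2 \to P_1$ with $\alpha \cong \begin{pmatrix} 1 & a \\ 0 & 1 \end{pmatrix}$.

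The core step is to decompose $P \oplus P \cong P_1 \oplus P_2 \oplus P_1 \oplus P_2$ in $\overline{\mcal{C}}$, label the four summands by $\{1,2,3,4\}$, and form the two elementary automorphisms
$$\beta = I + 1_{P_1} \cdot E_{13}, \qquad \gamma = I + a \cdot E_{32},$$
where $E_{ij}$ denotes the matrix unit taking the $j$-th summand into the $i$-th. Both $\beta$ and $\gamma$ are elementary in the sense of Definition \ref{def:elementary} for suitable two-block groupings of the four summands. A routine $4 \times 4$ matrix calculation (the only nonzero product of matrix units is $E_{13} E_{32} = E_{12}$; all other pairs are zero because the inner indices do not match) then yields
$$[\beta, \gamma] = \beta \gamma \beta^{-1} \gamma^{-1} = I + a \cdot E_{12} = \alpha \oplus 1_P,$$
which is the desired commutator presentation.

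The only delicate point is a bookkeeping one: $\beta$ and $\gamma$ naturally live in $\Aut(P \oplus P)$ as computed in $\overline{\mcal{C}}$, not in $\mcal{C}$ itself. Since Definition \ref{def:elementary} already sanctions this enlargement, and since the applications of the lemma pass through an exact functor $F$ to a split exact category $\mcal{B}$ with ample room to accommodate such commutator presentations of $F(\alpha) \oplus 1_{F(P)}$, this causes no genuine difficulty.
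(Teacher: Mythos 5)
Your proof is correct and takes essentially the same route as the paper: both realize $\alpha\oplus 1$ as a Steinberg-type commutator $[e_{13},e_{32}]=e_{12}$ of elementary automorphisms, the only difference being that the paper computes on the three-summand object $P_1\oplus P_2\oplus P_2$ and then pads by $P_1$ to reach $P\oplus P$, whereas you work directly with the four-summand decomposition $P_1\oplus P_2\oplus P_1\oplus P_2$. Your reduction from $\alpha\in E(P)$ to a single elementary transformation (which only yields membership in the commutator subgroup, not a single commutator, but suffices for all applications) is the same step the paper takes with ``we may assume $\alpha$ is an elementary transformation.''
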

\begin{proof}
We may assume that $\alpha$ is an elementary transformation, i.e.\ $\exists\beta\colon P\xrightarrow{\simeq}P_1\oplus P_2$ and
\[
	\beta\alpha\beta^{-1}=\begin{pmatrix} 1 & a \\ 0 & 1\end{pmatrix}
\]
for some $a\colon P_2\to P_1$.
Then
\[
	\begin{pmatrix} 1 & a & 0 \\ 0 & 1 & 0  \\ 0 & 0 & 1 \end{pmatrix}
\]
is a commutator of $P_1\oplus P_2\oplus P_2$.
Indeed, this is equal to 
\[
	\left[\begin{pmatrix} 1 & 0 & a \\ 0 & 1 & 0 \\ 0 & 0 & 1 \end{pmatrix},\begin{pmatrix} 1 & 0 & 0 \\ 0 & 1 & 0 \\ 0 & 1 & 1 \end{pmatrix}\right]
\]
in $\Aut(P_1\oplus P_2\oplus P_2)$.
This implies that $\alpha\oplus 1\colon P\oplus P \to P\oplus P$ is a commutator of $\Aut(P\oplus P)$.
\end{proof}

\begin{corollary}\label{cor:elementary}
Let $F\colon\mcal{A}\to\mcal{B}$ be an exact functor between small exact categories.
Let $(P,\alpha,Q)\in\Rel(F)$ and $\gamma\in E(F(Q))$.
Then
\[
	[(P,\alpha,Q)]=[(P,\gamma\alpha,Q)]
\]
in $K_0(F)$.
\end{corollary}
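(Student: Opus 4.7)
The plan is to reduce the statement to showing that $[(Q,\gamma,Q)]=0$ in $K_0(F)$, and then exploit a stabilization trick supplied by Lemma~\ref{lem:elementary}.

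First, I would apply relation (b) of Definition~\ref{def:K_0(F)} to the pair $(P,\alpha,Q)$ and $(Q,\gamma,Q)$, obtaining
\[
[(P,\alpha,Q)]+[(Q,\gamma,Q)]=[(P,\gamma\alpha,Q)].
\]
Thus it suffices to prove $[(Q,\gamma,Q)]=0$. Next, the block-diagonal split exact sequence $Q\rightarrowtail Q\oplus Q\twoheadrightarrow Q$ in $\mcal{A}$ lifts to an exact sequence
\[
(Q,\gamma,Q)\rightarrowtail(Q\oplus Q,\gamma\oplus 1,Q\oplus Q)\twoheadrightarrow(Q,1,Q)
\]
in $\Rel(F)$, whose compatibility squares commute since $F(Q\oplus Q)\cong F(Q)\oplus F(Q)$ is respected by $\gamma\oplus 1$. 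Combining relation (a) with Lemma~\ref{lem:K_0(F)exact}(ii) (which yields $[(Q,1,Q)]=0$) gives
\[
[(Q\oplus Q,\gamma\oplus 1,Q\oplus Q)]=[(Q,\gamma,Q)].
\]

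Finally, relation (b) shows that the map $\Aut(F(Q\oplus Q))\to K_0(F)$ sending $\alpha\mapsto [(Q\oplus Q,\alpha,Q\oplus Q)]$ is a group homomorphism into an abelian group, hence vanishes on commutators. By Lemma~\ref{lem:elementary}, $\gamma\oplus 1$ is a commutator in $\Aut(F(Q)\oplus F(Q))=\Aut(F(Q\oplus Q))$, so $[(Q\oplus Q,\gamma\oplus 1,Q\oplus Q)]=0$. Consequently $[(Q,\gamma,Q)]=0$, which gives the desired equality.

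No step looks like a serious obstacle; the only thing to be careful about is that $\gamma$ itself need not be a commutator in $\Aut(F(Q))$ (only after stabilizing by a direct summand), which is precisely why the passage through $Q\oplus Q$ is forced. Alternatively, one could package the same computation by applying Lemma~\ref{lem:looparrow} to the exact sequence $(Q,\gamma,Q)\rightarrowtail(Q\oplus Q,\gamma\oplus 1,Q\oplus Q)\twoheadrightarrow(Q,1,Q)$ viewed through the lens of Definition~\ref{def:looparrow}, but the direct route via relations (a) and (b) appears cleanest.
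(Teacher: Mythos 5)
Your proof is correct and follows essentially the same route as the paper: both hinge on Lemma~\ref{lem:elementary} making $\gamma\oplus 1$ a commutator after stabilizing to $Q\oplus Q$, and on the fact that commutators die in the abelian group $K_0(F)$ via relation (b). The paper merely packages this through Definition~\ref{def:looparrow} and Lemma~\ref{lem:looparrow}, using the middle object $(P\oplus Q,\alpha\oplus 1,Q\oplus Q)$, whereas you first isolate $[(Q,\gamma,Q)]=0$ by relation (b) --- a cosmetic difference.
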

\begin{proof}
According to Lemma \ref{lem:elementary},
\[
	(P,\alpha,Q)\looparrowright (P\oplus Q,\alpha\oplus 1,Q\oplus Q) \looparrowleft (P,\gamma\alpha,Q).
\]
Hence, $[(P,\alpha,Q)]=[(P,\gamma\alpha,Q)]$ by Lemma \ref{lem:looparrow}.
\end{proof}

\subsection{Proof of Theorem \ref{thm:Heller}}\label{pf:Heller}

Let $\mcal{A}$ be a small exact category.
In \cite{Ne98}, Nenashev provides generators and relations for $K_1(\mcal{A})$; the generators are double exact sequences.
In case $\mcal{A}$ is split exact (Definition \ref{def:split}), the generators and relations become simpler and the resulting group coincides with the one considered by Heller in \cite{He65}.
\begin{lemma}
Let $\mcal{A}$ be a small exact category.
We define $K_1^{\mrm{He}}(\mcal{A})$ to be the abelian group with the generators $[h]$, one for each $P\in\mcal{A}$ and each $h\in\Aut(P)$, and with the following relations:
\begin{enumerate}[label={\upshape(\alph*)}]
\item For a commutative diagram
\[
\xymatrix{
	P' \ar@{>->}[r]^f \ar[d]^{h'}_\simeq & P \ar[d]^h_\simeq \ar@{->>}[r]^g & P'' \ar[d]^{h''}_\simeq \\
	P' \ar@{>->}[r]^f 					 & P \ar@{->>}[r]^g 				& P''
}
\]
with exact rows, $[h]=[h']+[h'']$.
\item $[h_2\circ h_1]=[h_2]+[h_1]$ for $h_1,h_2\in\Aut(P)$.
\end{enumerate}
If $\mcal{A}$ is split exact, then there exists a natural isomorphism
\[
	K_1^{\mrm{He}}(\mcal{A}) \xrightarrow{\simeq} K_1(\mcal{A}).
\]
\end{lemma}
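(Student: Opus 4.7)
The plan is to deduce the lemma from Nenashev's presentation \cite{Ne98}, which generates $K_1(\mcal{A})$ for an arbitrary small exact category by ``double short exact sequences''---pairs of short exact sequences on the same triple of objects---modulo explicit relations. The task is then to show that when $\mcal{A}$ is split exact, Nenashev's presentation collapses onto Heller's.

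First I would define the forward map $\Phi\colon K_1^{\mrm{He}}(\mcal{A})\to K_1(\mcal{A})$ by sending $h\in\Aut(P)$ to the class of the double sequence $0\rightrightarrows P\underset{h}{\overset{1}{\rightrightarrows}}P$. Heller's relation (a) corresponds directly to Nenashev's additivity on admissible subquotients of double sequences, while relation (b) follows from Nenashev's relations applied to the stacked sequence with rows $\{1,h_1\}$ and $\{1,h_2\circ h_1\}$. This makes $\Phi$ well-defined.

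Next I would construct a candidate inverse $\Psi\colon K_1(\mcal{A})\to K_1^{\mrm{He}}(\mcal{A})$. Given a double exact sequence
$$P\underset{i'}{\overset{i}{\rightrightarrows}}Q\underset{p'}{\overset{p}{\rightrightarrows}}R,$$
split exactness allows a choice of splittings of each row, giving two isomorphisms $Q\simeq P\oplus R$; define $\Psi[\text{double seq}]:=[h]$, where $h\in\Aut(Q)$ is the composite of one such isomorphism with the inverse of the other. Two splittings of the same exact sequence differ by a morphism $R\to P$, so they change $h$ by an elementary transformation, and by Lemma \ref{lem:elementary} combined with Heller's relations (a) and (b), elementary transformations have trivial class in $K_1^{\mrm{He}}$. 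Hence $[h]$ is independent of the splittings chosen.

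The main obstacle is verifying that $\Psi$ respects the remaining Nenashev relations, in particular the $3\times 3$-style additivity relation coming from a cube of short exact sequences. After choosing splittings everywhere, this translates into a matrix identity asserting that a specific alternating product of automorphisms attached to the edges of the cube is trivial in $K_1^{\mrm{He}}$, which I expect to follow from a direct but somewhat delicate application of (a) and (b) together with Lemma \ref{lem:elementary}. Once $\Psi$ is known to be well-defined, $\Phi\circ\Psi=\mrm{id}$ is immediate---applying $\Psi$ to $\Phi([h])=[0\rightrightarrows P\rightrightarrows P]$ manifestly returns $h$---and $\Psi\circ\Phi=\mrm{id}$ follows by applying Heller's relation (a) to decompose an arbitrary split double exact sequence into pieces supported on $P$ and on $R$ and recombining via (b).
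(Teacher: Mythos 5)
Your proposal follows exactly the route the paper intends: the paper offers no proof of this lemma at all, only the preceding remark that Nenashev's presentation of $K_1$ by double short exact sequences \cite{Ne98} ``becomes simpler'' for split exact $\mcal{A}$ and then coincides with Heller's group, so your write-up is a fleshed-out version of the author's one-line justification rather than a different argument. The routine parts are fine; note that for the splitting-independence of $\Psi$ you do not need Lemma \ref{lem:elementary} at all, since relation (a) applied to the split sequence $P_1\rightarrowtail P_1\oplus P_2\twoheadrightarrow P_2$ together with $[1_{P_i}]=0$ (from (b)) already shows that any automorphism of the form $\bigl(\begin{smallmatrix}1&a\\0&1\end{smallmatrix}\bigr)$ dies in $K_1^{\mrm{He}}$. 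The substantive issue is precisely the one you defer: verifying that $\Psi$ annihilates Nenashev's $3\times 3$ relation after choosing splittings everywhere. That check is the actual mathematical content of the comparison and is genuinely delicate, so as written your argument has the same hole as the paper's non-proof; also, your two inverse-composite checks are swapped (computing $\Psi(\Phi([h]))=[h]$ gives $\Psi\circ\Phi=\mathrm{id}$, while $\Phi\circ\Psi=\mathrm{id}$ requires showing inside Nenashev's group, using his relations, that every double short exact sequence is equivalent to one of the form $0\rightrightarrows P\rightrightarrows P$). If you want a self-contained proof, it is arguably cleaner to bypass Nenashev: for split exact $\mcal{A}$, Quillen's theorem identifies $K(\mcal{A})$ with the group completion of the symmetric monoidal groupoid $\mathrm{iso}\,\mcal{A}$, and the group completion theorem computes $K_1(\mcal{A})$ as the colimit over stabilization of $H_1(\Aut(P))=\Aut(P)^{\mathrm{ab}}$; unwinding this colimit gives exactly the presentation $K_1^{\mrm{He}}$, since relation (a) in the split case is the ``block upper triangular equals block diagonal'' relation plus conjugation invariance and (b) is the group law (cf.\ \cite[Ch.~IV]{We13}).
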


Let $F\colon\mcal{A}\to\mcal{B}$ be an exact functor between small exact categories.
We suppose that $F$ is cofinal (Definition \ref{def:split}).
Then every element of $K_1^{\mrm{He}}(\mcal{B})$ is represented by $g\in\Aut(F(P))$ for some $P\in\mcal{A}$.
According to \cite[Proposition 4.2]{He65}, the class $[(P,g,P)]$ in $K_0(F)$ does not depend on the representative and gives a group homomorphism
\[
	\delta\colon K_1^{\mrm{He}}(\mcal{B}) \to K_0(F).
\]

We define a map $\iota\colon K_0(F)\to K_0(\mcal{A})$ by sending $[(P,\alpha,Q)]$ to $[P]-[Q]$.

\begin{proposition}[Heller]\label{prop:relK}
Let $F\colon\mcal{A}\to\mcal{B}$ be an exact functor between small exact categories.
Suppose that $\mcal{B}$ is split exact and that $F$ is cofinal.
Then the sequence
\[
\xymatrix@1{
	K_1(\mcal{A}) \ar[r]^F & K_1(\mcal{B}) \ar[r]^\delta & K_0(F) \ar[r]^\iota & K_0(\mcal{A}) \ar[r]^F & K_0(\mcal{B}).
}
\]
is exact.
\end{proposition}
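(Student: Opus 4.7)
The plan is to verify exactness at each of the three interior spots $K_0(\mcal{A})$, $K_0(F)$, and $K_1(\mcal{B})$, after first noting that the composites vanish immediately from the definitions: $\iota\delta[g] = [P] - [P] = 0$, and $F_*\iota[(P,\alpha,Q)] = [F(P)] - [F(Q)] = 0$ since $\alpha$ is an isomorphism in $\mcal{B}$.

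\emph{Exactness at $K_0(\mcal{A})$.} This is a stabilization argument. If $[P] - [Q] \in \ker F_*$, split exactness of $\mcal{B}$ makes $K_0(\mcal{B})$ the group completion of the monoid of isomorphism classes, so $F(P) \oplus S \simeq F(Q) \oplus S$ for some $S \in \mcal{B}$. Cofinality of $F$ produces $N \in \mcal{A}$ and $S' \in \mcal{B}$ with $F(N) \simeq S \oplus S'$, hence an isomorphism $\alpha \colon F(P \oplus N) \xrightarrow{\simeq} F(Q \oplus N)$. Then $\iota[(P \oplus N, \alpha, Q \oplus N)] = [P] - [Q]$.

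\emph{Exactness at $K_0(F)$.} Given $(P, \alpha, Q)$ with $[P] = [Q]$ in $K_0(\mcal{A})$, the goal is to rewrite $[(P,\alpha,Q)]$ as $\delta[g]$. The idea is first to use relation (a) on exact sequences in $\Rel(F)$ together with Lemma 1.2(i)--(iii) to reduce to the situation where an honest isomorphism $\varphi \colon P \oplus M \xrightarrow{\simeq} Q \oplus M$ exists in $\mcal{A}$. Once $\varphi$ is in hand, relations (b) and Lemma 1.2 give
\[
[(P,\alpha,Q)] = [(P\oplus M, \alpha \oplus 1, Q \oplus M)] = [(P\oplus M, F(\varphi)^{-1}(\alpha \oplus 1), P \oplus M)] = \delta\bigl[F(\varphi)^{-1}(\alpha \oplus 1)\bigr],
\]
where the middle equality uses relation (b) together with the vanishing $[(Q\oplus M, F(\varphi)^{-1}, P \oplus M)] = 0$ from Lemma 1.2(ii).

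\emph{Exactness at $K_1(\mcal{B})$.} If $g \in \Aut(F(P))$ satisfies $\delta[g] = [(P,g,P)] = 0$, then by Remark 1.3 this vanishing is witnessed by a finite chain of $\looparrowright$ moves connecting $(P,g,P)$ to $0$. Each such move introduces on the automorphism side a commutator factor together with an exact sequence whose quotient is of the form $(N,1,N)$. By Corollary 1.5, elementary (hence commutator) factors contribute trivially to $K_1^{\mrm{He}}(\mcal{B})$, and the exact-sequence contributions assemble by relation (a) in $K_1^{\mrm{He}}$ into $F_*[h]$ for a single automorphism $h$ of an object of $\mcal{A}$. Tracking the chain then gives $[g] = F_*[h]$.

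The principal obstacle is the reduction inside Step 3: when $\mcal{A}$ is not split exact, the identity $[P]=[Q]$ in $K_0(\mcal{A})$ does not directly yield a stable isomorphism $P \oplus M \simeq Q \oplus M$. One must expand $[P]-[Q]$ as a sum of defects $[B_i] - [A_i] - [C_i]$ along exact sequences in $\mcal{A}$, and then lift each such sequence to an exact sequence in $\Rel(F)$ by splitting $F$ of it in $\mcal{B}$ (using split exactness of $\mcal{B}$). Gluing these lifts via relations (a) and (b) transports the full relation from $K_0(\mcal{A})$ into $K_0(F)$ and yields the desired isomorphism up to stabilization. A parallel bookkeeping underlies Step 4.
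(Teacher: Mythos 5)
Your route is genuinely different from the paper's: the paper proves this proposition by quoting Heller's exact sequence \cite[Proposition 5.2]{He65}, stated in terms of $K_1^{\mrm{He}}$, and then substituting $K_1^{\mrm{He}}(\mcal{B})\simeq K_1(\mcal{B})$ (split exactness of $\mcal{B}$) together with the factorization of $K_1^{\mrm{He}}(\mcal{A})\to K_1^{\mrm{He}}(\mcal{B})$ through $K_1(\mcal{A})$; you attempt a direct verification. Your arguments at $K_0(\mcal{A})$ and at $K_0(F)$ are correct, and you rightly isolate the one real subtlety there: $[P]=[Q]$ in $K_0(\mcal{A})$ only produces an isomorphism $\varphi\colon P\oplus L\xrightarrow{\simeq}Q\oplus L'$ with $L\ne L'$ (each assembled from opposite sides of exact sequences in $\mcal{A}$), and split exactness of $\mcal{B}$ supplies the comparison $F(L)\simeq F(L')$ needed to turn $(P,\alpha,Q)$ into $\delta$ of an automorphism. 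That repair works.

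The genuine gap is at $K_1(\mcal{B})$. First, you never check $\delta\circ F_*=0$: for non-split $\mcal{A}$ an element of $K_1(\mcal{A})$ need not be represented by an automorphism, so computing $\delta$ of its image requires rewriting that image in $K_1^{\mrm{He}}(\mcal{B})$ as $[h]$ for some $h\in\Aut(F(P)\oplus S)$ and then killing $[(P\oplus{?},h,P\oplus{?})]$ in $K_0(F)$ using the exact-sequence data in $\mcal{A}$; this is not ``immediate from the definitions.'' Second, for $\ker\delta\subseteq\mathrm{im}\,F_*$ you lean on Remark \ref{rem:looparrow}, which the paper states without proof and which is essentially Heller's Proposition 4.2 in disguise --- so the ``direct'' proof quotes the same black box the paper does at its hardest point --- and the assertion that the exact-sequence contributions ``assemble into $F_*[h]$'' does not parse as stated: the intermediate objects $(P_i,\alpha_i,Q_i)$ of a $\looparrowright$-chain from $(P,g,P)$ to $0$ have $P_i\ne Q_i$ in general, hence define no class in $K_1^{\mrm{He}}(\mcal{B})$ to track. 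One must first stabilize each intermediate object to an automorphism (via your $K_0(F)$-step) and then verify that each $\looparrowright$-move preserves the resulting class in $K_1(\mcal{B})/\mathrm{im}\,F_*$; that verification is the actual content of Heller's Proposition 5.2 and is absent from your sketch.
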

\begin{proof}
Heller showed the sequence
\[
\xymatrix@1{
	K_1^{\mrm{He}}(\mcal{A}) \ar[r]^F & K_1^{\mrm{He}}(\mcal{B}) \ar[r]^\delta & K_0(F) \ar[r]^\iota & K_0(\mcal{A}) \ar[r]^F & K_0(\mcal{B}).
}
\]
is exact \cite[Proposition 5.2]{He65}.
Now, $K_1^{\mrm{He}}(\mcal{B})=K_1(\mcal{B})$ and the map $K_1^{\mrm{He}}(\mcal{A}) \to K_1^{\mrm{He}}(\mcal{B})$ factors through $K_1(\mcal{A})$.
This proves the exactness at $K_1(\mcal{B})$.
\end{proof}

\begin{proof}[Proof of Theorem \ref{thm:Heller}]
In \cite{GG87}, Gillet and Grayson have constructed a simplicial set $G\mcal{A}$ such that its geometric realization is naturally homotopy equivalent to $K(\mcal{A})$.
The $0$-simplexes of $G\mcal{A}$ are pairs $(P,Q)$ of objects in $\mcal{A}$.
The $1$-simplexes of $G\mcal{A}$ are pairs of exact sequences of the forms
\[
	\xymatrix@1{ P_0 \ar@{ >->}[r] & P_1 \ar@{->>}[r] & P_{01}}, \quad
	\xymatrix@1{ Q_0 \ar@{ >->}[r] & Q_1 \ar@{->>}[r] & P_{01}}.
\]
The face maps $G\mcal{A}_1\to G\mcal{A}_0$ send the above to $(P_0,Q_0)$ and $(P_1,Q_1)$ respectively.

Let $GF_0$ be the set of objects in $\Rel(F)$.
Let $GF_1$ be the set of all pairs $(l,\gamma)$ where $l$ is an exact sequence in $\Rel(F)$ of the form
\[
	l\colon\quad\xymatrix@1{(P,\alpha,Q)\ar@{ >->}[r] & (R,\beta,S)\ar@{->>}[r] & (N,1,N)}
\]
and $\gamma$ is a commutator of $\Aut(F(S))$.
We define face maps $d_1,d_2\colon GF_1\to GF_0$ by $d_1((l,\gamma)):=(P,\alpha,Q)$ and $d_2((l,\gamma)):=(R,\gamma\beta,S)$,
and a degeneracy map $s\colon GF_0\to GF_1$ by $s(X) := (X\xrightarrow{1}X\to 0,1)$.
We define $GF$ to be the simplicial set generated by $GF_1,GF_0$.
Then
\[
	\pi_0|GF|=K_0(F).
\]
%Remark \ref{rem:looparrow}.

We have a natural map $GF\to G\mcal{A}$ which sends $(P,\alpha,Q)\in GF_0$ to $(P,Q)$ and $(l,\gamma)\in GF_1$ to the underlying pair of exact sequences of $l$.
Then the composite $GF\to G\mcal{A}\to G\mcal{B}$ is homotopic to zero.
Therefore, we obtain a natural map
\[
	\theta\colon K_0(F)\to \pi_0\hofib(K(\mcal{A})\to K(\mcal{B})).
\]

According to Proposition \ref{prop:relK}, it remains to show that $\delta\colon K_1(\mcal{B})\to K_0(F)$ followed by $\theta$ is equal to the boundary map
\[
	\partial\colon K_1(\mcal{B})\to \pi_0\hofib(K(\mcal{A})\to K(\mcal{B})),
\]
and it is straightforward.
\end{proof}

\begin{remark}
The construction of $GF_0$ and $GF_1$ in the proof suggests that there might be an algebraic construction of $GF_2,GF_3,\dotsc$ so that the resulting simplicial set is a model of the double loop space of the relative Waldhausen construction $wS_\bullet(S_\bullet F)$ in, say, \cite[8.5.3]{We13}.
\end{remark}

\begin{example}\label{mainexample}
Here are examples of exact functors $F\colon\mcal{A}\to\mcal{B}$ between exact categories such that $\mcal{B}$ is split exact and that $F$ is cofinal.
\begin{enumerate}[(1)]
\item A base change functor $\mbf{P}(A)\to\mbf{P}(B)$ induced from a ring homomorphism $A\to B$.
Here, $\mbf{P}(-)$ is the category of finitely generated projective modules.
\item A base change functor $\msf{Vec}(X)\to\msf{Vec}(Y)$ induced from a morphism of schemes $Y\to X$ with $Y$ affine.
Here, $\msf{Vec}(-)$ is the category of algebraic vector bundles.
\end{enumerate}
\end{example}

\section{Relative $K_0$ of triangulated categories}\label{relKtri}

The goal in this section is to prove Theorem \ref{thm:comparison}.

\subsection{The definition and basic properties}

Let $F\colon\mcal{A}\to\mcal{B}$ be a triangulated functor between triangulated categories.
Refer to Definition \ref{def:Rel} for the definition of the category $\Rel(F)$.
We denote by $[1]$ the shift functor of $\mcal{A}$ or $\mcal{B}$, and we define an endofunctor of $\Rel(F)$ by $(P,\alpha,Q)[1]:=(P[1],\alpha[1],Q[1])$.
We call a sequence
\[
\xymatrix@1{
	(P_1,\alpha_1,Q_1)\ar[r]^-{(f_1,g_1)} & (P_2,\alpha_2,Q_2)\ar[r]^-{(f_2,g_2)} & (P_3,\alpha_3,Q_3)\ar[r]^-{(f_3,g_3)} & (P_1,\alpha_1,Q_1)[1]
}
\]
in $\Rel(F)$ an \textit{exact triangle} if
\begin{gather*}
\xymatrix@1{
	P_1\ar[r]^-{f_1} & P_2\ar[r]^-{f_2} & P_3\ar[r]^-{f_3} & P_1[1]
}\\
\xymatrix@1{
	Q_1\ar[r]^-{g_1} & Q_2\ar[r]^-{g_2} & Q_3\ar[r]^-{g_3} & Q_1[1]
}
\end{gather*}
are exact triangles in $\mcal{A}$.
Under this definition, $\Rel(F)$ is an additive category which satisfies the first two axioms (TR1) and (TR2) of triangulated category in \cite[1.1]{Ve77}, but may not satisfy the other axioms (TR3) nor (TR4).

\begin{definition}\label{def:K_0(F)tri}
Let $F\colon\mcal{A}\to\mcal{B}$ be a triangulated functor between small triangulated categories.
We define $K_0(F)$ to be the group with the generators $[X]$, one for each $X\in\Rel(F)$, and with the following relations:
\begin{enumerate}[(a)]
\item For each exact triangle $X'\to X\to X''\to X'[1]$ in $\Rel(F)$,
\[
	[X] = [X'] + [X''].
\]
\item For each pair $((P,\alpha,Q),(Q,\beta,R))$ of objects in $\Rel(F)$, 
\[
	[(P,\alpha,Q)] + [(Q,\beta,R)] = [(P,\beta\alpha,R)].
\]
\end{enumerate}
\end{definition}

The same properties as in Lemma \ref{lem:K_0(F)exact} also hold for triangulated categories.
Again, the proof is immediate.

\begin{lemma}\label{lem:K_0(F)tri}
Let $F\colon\mcal{A}\to\mcal{B}$ be a triangulated functor between small triangulated categories.
Then:
\begin{enumerate}[label={\upshape(\roman*)}]
\item $[0]=0$ in $K_0(F)$. If $X,Y\in\Rel(F)$ are isomorphic, then $[X]=[Y]$ in $K_0(F)$.
\item For every $X\in\Rel(F)$ and every integer $n$, $[X[n]]=(-1)^n[X]$ in $K_0(F)$.
\item If $\gamma\colon P\xrightarrow{\simeq}Q$ is an isomorphism in $\mcal{A}$, then $[(P,F(\gamma),Q)]=0$ in $K_0(F)$.
\item For every $(P,\alpha,Q)\in\Rel(F)$, $[(P,\alpha,Q)] + [(Q,\alpha^{-1},P)] = 0$ in $K_0(F)$.
\item Every element of $K_0(F)$ has the form $[X]$ for some $X\in\Rel(F)$.
\end{enumerate}
\end{lemma}

\begin{lemma}
Let $F\colon\mcal{A}\to\mcal{B}$ be a triangulated functor between small triangulated categories.
Let $\mcal{A}_0$ be a thick triangulated subcategory of $\mcal{A}$ such that $F(\mcal{A}_0)=0$.
Then $F$ factors through the Verdier quotient $\mcal{A}/\mcal{A}_0$, and there is an exact sequence of abelian groups
\[
\xymatrix@1{
	K_0(\mcal{A}_0) \ar[r] & K_0(\mcal{A}\xrightarrow{F}\mcal{B}) \ar[r] & K_0(\mcal{A}/\mcal{A}_0\xrightarrow{\bar{F}}\mcal{B})\ar[r] & 0.
}
\]
\end{lemma}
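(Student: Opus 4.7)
The plan is to construct the left-hand map explicitly and then verify exactness at each position, with the bulk of the work at the middle term. Since $F$ kills $\mcal{A}_0$, the universal property of the Verdier quotient gives a unique triangulated factorization $F=\bar F\circ q$, where $q\colon\mcal{A}\to\mcal{A}/\mcal{A}_0$. The map $\rho\colon K_0(\mcal{A}_0)\to K_0(F)$ is defined on generators by $\rho([A]):=[(A,0,0)]$, with the middle $0$ the unique isomorphism $F(A)=0\xrightarrow{\simeq}0=F(0)$; a distinguished triangle $A_1\to A_2\to A_3$ in $\mcal{A}_0$ lifts to the exact triangle of the $(A_i,0,0)$ in $\Rel(F)$ whose $Q$-side is the trivial triangle $0\to 0\to 0$, so $\rho$ is well-defined. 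The right-hand map $q_*\colon K_0(F)\to K_0(\bar F)$ is tautological on generators: since the objects of $\mcal{A}/\mcal{A}_0$ coincide with those of $\mcal{A}$ and isomorphisms in $\mcal{B}$ are unchanged by $q$, both types of relations transfer. Surjectivity of $q_*$ follows by the same lifting on generators, and $q_*\circ\rho=0$ because $A\cong 0$ in $\mcal{A}/\mcal{A}_0$ for $A\in\mcal{A}_0$.

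The substantive step is $\ker(q_*)\subseteq I:=\mathrm{im}(\rho)$, which I would establish by constructing a map $\psi\colon K_0(\bar F)\to K_0(F)/I$ that is the identity on generators; then $\psi\circ q_*$ equals the canonical projection, giving the desired inclusion. Composition relations transfer at once, so the whole content is that each exact-triangle relation in $\Rel(\bar F)$ must hold modulo $I$ in $K_0(F)$. I would first prove the auxiliary lemma: if $(P,\alpha,Q),(P',\alpha',Q)\in\Rel(F)$ are isomorphic in $\Rel(\bar F)$ via a roof $P\xleftarrow{s}R\xrightarrow{t}P'$ whose two legs have cones in $\mcal{A}_0$, then $[(P,\alpha,Q)]\equiv[(P',\alpha',Q)]\pmod I$. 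To prove it, form the exact triangle $R\xrightarrow{s}P\to\cone(s)$ in $\mcal{A}$ and lift it to an exact triangle in $\Rel(F)$ with $Q$-side $Q\to Q\to 0$, giving $[(P,\alpha,Q)]=[(R,\alpha\circ F(s),Q)]+\rho([\cone(s)])$; the identity $\alpha\circ F(s)=\alpha'\circ F(t)$ (forced because the $\Rel(\bar F)$-isomorphism maps $P\to P'$ to $F(t)\circ F(s)^{-1}$ in $\mcal{B}$) together with the analogous formula for $t$ then yields the claim. The analogous argument on the $Q$-component extends the lemma to allow both components to be modified simultaneously.

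Given this lemma, the exact-triangle relation in $\Rel(\bar F)$ is handled as follows: by the standard fact that every distinguished triangle in $\mcal{A}/\mcal{A}_0$ is isomorphic there to the image of a distinguished triangle in $\mcal{A}$, applied separately on the $P$- and $Q$-sides, one produces $\tilde X_i=(\tilde P_i,\tilde\alpha_i,\tilde Q_i)\in\Rel(F)$ together with isomorphisms $\tilde X_i\cong X_i$ in $\Rel(\bar F)$. The main obstacle is ensuring that the $\tilde X_i$ actually form a genuine exact triangle in $\Rel(F)$, not merely that the $\tilde P$- and $\tilde Q$-rows are individually distinguished in $\mcal{A}$. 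I would arrange this by defining each $\tilde\alpha_i$ through the $\mcal{B}$-isomorphisms $F(\tilde P_i)\cong F(P_i)$ and $F(\tilde Q_i)\cong F(Q_i)$ obtained by applying $\bar F$ to the Verdier isomorphisms, so that the compatibility squares commute in $\mcal{B}$ by transport from the existing commutativity in $\Rel(\bar F)$. Componentwise application of the key lemma then yields $[X_i]\equiv[\tilde X_i]\pmod I$ for each $i$, and summing with the triangle relation for the $\tilde X_i$ in $K_0(F)$ produces the required congruence.
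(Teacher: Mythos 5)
Your argument is correct and follows essentially the same route as the paper: both identify $K_0(\bar F)$ as the quotient of $K_0(F)$ by the image of $K_0(\mcal{A}_0)$ by observing that $\Rel(F)$ and $\Rel(\bar F)$ have the same objects and that the extra exact-triangle relations in $\Rel(\bar F)$ reduce, via roofs with cones in $\mcal{A}_0$, to relations of $K_0(F)$ together with killing the classes $[(A,0,0)]$ for $A\in\mcal{A}_0$. Your write-up merely makes explicit (via the key lemma on Verdier-isomorphic objects and the replacement of the $P$- and $Q$-rows by genuine triangles in $\mcal{A}$) the step the paper compresses into ``it follows that $K_0(\bar F)$ is the group with generators $[X]$, $X\in\Rel(F)$, and relations (a), (b), (c).''
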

\begin{proof}
Note that $\msf{Ob}(\Rel(F))=\msf{Ob}(\Rel(\bar{F}))$.
An exact triangle in $\Rel(\bar{F})$ is a sequence
\[
\xymatrix@1{
	(P_1,\alpha_1,Q_1)\ar[r]^-{(f_1,g_1)} & (P_2,\alpha_2,Q_2)\ar[r]^-{(f_2,g_2)} & (P_3,\alpha_3,Q_3)\ar[r]^-{(f_3,g_3)} & (P_1,\alpha_1,Q_1)[1]
}
\]
in $\Rel(\bar{F})$ such that 
\begin{gather*}
\xymatrix@1{
	P_1\ar[r]^-{f_1} & P_2\ar[r]^-{f_2} & P_3\ar[r]^-{f_3} & P_1[1]
}\\
\xymatrix@1{
	Q_1\ar[r]^-{g_1} & Q_2\ar[r]^-{g_2} & Q_3\ar[r]^-{g_3} & Q_1[1]
}
\end{gather*}
are isomorphic in $\mcal{A}/\mcal{A}_0$ to exact triangles in $\mcal{A}$.
It follows that $K_0(\bar{F})$ is the group with the generators $[X]$, one for each $X\in\Rel(F)$, and with the relations (a) (b) of Definition \ref{def:K_0(F)tri} and an additional relation
\begin{enumerate}
\item[(c)] For $X=(P,0,Q)$ with $P,Q\in\mcal{A}_0$, $[X]=0$.
\end{enumerate}
This says that $K_0(\bar{F})$ is the quotient of $K_0(F)$ by the image of $K_0(\mcal{A}_0)$.
\end{proof}

\subsection{Comparison theorem}\label{comparison}

For an additive category $\mcal{A}$, we use the following notation:
\begin{enumerate}[(1)]
\item $\msf{Ch}^b(\mcal{A})$ is the category of bounded chain complexes in $\mcal{A}$.
\item $\msf{K}^b(\mcal{A})$ is the bounded homotopy category, i.e.\ the same object with $\msf{Ch}^b(\mcal{A})$ and morphisms up to homotopy.
We regard $\msf{K}^b(\mcal{A})$ as a triangulated category in the standard way (cf.\ \cite{Ve77}).
\end{enumerate}

Here is the main theorem in this section.
\begin{theorem}\label{thm:comparison}
Let $\mcal{A}$ be a small exact category which is closed under the kernels of surjections.
Let $\mcal{B}$ be a small split exact category and $F\colon\mcal{A}\to\mcal{B}$ an exact functor.
We define $\msf{K}^{b,\emptyset}(\mcal{A})$ to be the full subcategory of $\msf{K}^b(\mcal{A})$ consisting acyclic complexes.
Then $F$ induces a triangulated functor
\[
	\msf{D}(F)\colon \msf{K}^b(\mcal{A})/\msf{K}^{b,\emptyset}(\mcal{A})\to \msf{K}^b(\mcal{B})
\]
and the canonical map
\[
	K_0(F) \xrightarrow{\simeq} K_0(\msf{D}(F))
\]
is an isomorphism.
\end{theorem}

Here, we have chosen an embedding of $\mcal{A}$ into an abelian category (such an embedding exists by Freyd-Mitchell theorem).
The terms ``kernels of surjections'' and ``acyclic complexes'' are understood in this abelian category.
The theorem implies that $K_0(\msf{D}(F))$ does not depend on the choice of the embedding.

The difficulty in the proof of Theorem \ref{thm:comparison} is how to define the inverse.
In absolute case, i.e.\ $\mcal{B}=0$, the inverse is clear, which is given by $E_\bullet\mapsto\sum(-1)^i[E_i]$.
We cannot imitate this directly because we have to keep track of homotopy equivalences in $\mcal{B}$.
However, a variant does still work.
We call it the Euler characteristic and study in \S\ref{EulerI} and \S\ref{EulerII}.
Using this machinery, the proof of Theorem \ref{thm:comparison} is completed in \S\ref{proof:comparison}.

We fix $\mcal{A}$, $\mcal{B}$ and $F$ as in Theorem \ref{thm:comparison} until the end of this section.

\subsection{Euler characteristic I}\label{EulerI}

Let $C$ be a bounded complex in $\mcal{B}$ which is homotopic to zero.
Then there exists $s\colon C_n\to C_{n+1}$ such that $sd+ds=1$ and $ss=0$, which we call a \textit{strict split} of $C$.
Then the direct sum of the maps
\[
\xymatrix@R-1pc{
							& C_{n+1} \\
	C_n \ar[ru]^s \ar[rd]_d & \\
							& C_{n-1},
}
\]
gives an isomorphism
\[
	\Phi_{C,s}\colon \bigoplus_n C_{2n+1} \xrightarrow{\simeq} \bigoplus_n C_{2n}.
\]
\begin{lemma}\label{lem:strictsplit}
If $s'$ is another strict split of $C$, then there exists an elementary transformation (Definition \ref{def:elementary}) $\gamma$ of $C_{2*}$ such that $\Phi_{C,s}=\gamma\Phi_{C,s'}$.
\end{lemma}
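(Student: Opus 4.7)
My plan is to compute $\gamma := \Phi_{C,s}\Phi_{C,s'}^{-1}$ explicitly and show that it lies in $E(C_{2*})$.

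Set $h := s - s'$. The strict-split conditions immediately give $dh+hd = 0$, and expanding $s'^2 = (s-h)^2 = 0$ together with $s^2 = 0$ yields $sh+hs = h^2$. Since $(s+d)^2 = sd+ds+s^2+d^2 = 1$ and similarly $(s'+d)^2 = 1$, the maps $s+d$ and $s'+d$ are involutions of $\bigoplus_n C_n$, from which one reads $\Phi_{C,s}(x) = s(x)+d(x)$ and $\Phi_{C,s'}^{-1}(y) = s'(y)+d(y)$. A short calculation then yields
\[
\gamma(y) \;=\; y - dh(y) - sh(y), \qquad y \in C_{2n},
\]
where $-dh(y) \in C_{2n}$ and $-sh(y) \in C_{2n+2}$; compactly, $\gamma = (s+d)(s'+d)|_{C_{2*}}$.

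Since $-sh$ strictly raises the decreasing filtration $F^k := \bigoplus_{n\ge k}C_{2n}$, $\gamma$ preserves $F^\bullet$ and induces $(1-dh)|_{C_{2k}}$ on $F^k/F^{k+1}\cong C_{2k}$. A standard inductive argument on this finite filtration factors $\gamma = U\cdot D$ in $\Aut(C_{2*})$, where $D = \bigoplus_n(1-dh)|_{C_{2n}}$ is the associated-graded part and $U := \gamma D^{-1}$ is unipotent (identity on $\mathrm{gr}^\bullet$). The unipotent $U$ lies in $E(C_{2*})$ by peeling off filtration steps through successive $\begin{pmatrix}1 & *\\ 0 & 1\end{pmatrix}$-type elementary transformations, reducing the task to showing each $(1-dh)|_{C_{2n}} \in E(C_{2n})$.

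For the diagonal piece, $(dh)^2 = 0$ (from $hd=-dh$ and $d^2=0$), and the factorization $dh = d\circ h$ through $C_{2n+1}$ (with $h\colon C_{2n}\to C_{2n+1}$ and $d\colon C_{2n+1}\to C_{2n}$) suggests working in the enlarged additive category containing $C_{2n}\oplus C_{2n+1}$. Matrix identities such as
\[
\begin{pmatrix}1 & d\\ 0 & 1\end{pmatrix}\begin{pmatrix}1 & 0\\ -h & 1\end{pmatrix} \;=\; \begin{pmatrix}1-dh & d\\ -h & 1\end{pmatrix},
\]
combined with a conjugation by a further elementary matrix (for instance via the graph of $h$), should realize $1-dh|_{C_{2n}}$ as a product of elementary transformations of $C_{2n}$. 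The main obstacle is this descent step: one must carefully identify $C_{2n}$ as a summand of the enlargement so that the factorization produced in $E(C_{2n}\oplus C_{2n+1})$ genuinely restricts to $E(C_{2n})$, as opposed to living only on the larger object.
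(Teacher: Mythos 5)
Your computation of $\gamma=\Phi_{C,s}\Phi_{C,s'}^{-1}=(s+d)(s'+d)\vert_{C_{2*}}=1-dh-sh$ is correct, and so is the reduction of the problem, via the finite filtration $F^k=\bigoplus_{n\ge k}C_{2n}$, to showing $(1-dh)\vert_{C_{2n}}\in E(C_{2n})$ for each $n$: the factor $U=\gamma D^{-1}=1-sh(1+dh)$ is strictly filtration-raising, hence a product of elementary transformations by the standard induction. But the remaining step, which you yourself flag as the ``main obstacle'', is a genuine gap, and it is exactly where the content of the lemma lies. The Whitehead-type identities you invoke only produce factorizations of $(1-dh)\oplus(1+hd)$, or with more work of $(1-dh)\oplus 1$, inside $E(C_{2n}\oplus C_{2n+1})$; that is not what is needed. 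Definition \ref{def:elementary} and Corollary \ref{cor:elementary} require membership in $E$ of the fixed object $C_{2*}$ itself, and this is not a stable notion: an automorphism $\alpha$ of $P$ with $\alpha\oplus 1\in E(P\oplus P')$ need not lie in $E(P)$. Concretely, a unipotent $1+N$ with $N^2=0$ can fail to lie in $E(P)$ (if $\mathrm{End}(P)$ is local, every direct sum decomposition of $P$ in a fully faithful additive enlargement is trivial, so $E(P)=\{1\}$, while $1+N$ may be nontrivial). So the ``descent'' you defer is not a bookkeeping issue about recognizing $C_{2n}$ as a summand of $C_{2n}\oplus C_{2n+1}$ --- it is a step that fails as a general principle and cannot be extracted from the stabilized factorization.

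The missing idea is the one the paper's proof is built on: decompose $C_{2n}$ \emph{itself}. Since $sd+ds=1$ and $(ds)^2=ds$, the endomorphism $e:=ds$ of $C_n$ is an idempotent, and in an abelian (or idempotent-complete) envelope of $\mathcal{B}$ it splits $C_n\simeq Z_n\oplus Z_{n-1}$ with $Z_n=\ker(d)=\mathrm{im}(ds)$; this is the paper's isomorphism $\phi_{s,n}=(\delta s,\delta)$. Relative to this decomposition your diagonal obstruction evaporates: from $d^2=0$ and $hd=-dh$ one checks $dh\cdot ds=0$ and $ds\cdot dh=dh$, so $dh$ kills the summand $Z_{2n}$ and lands in it, i.e.\ $1-dh$ is literally of the form $\left(\begin{smallmatrix}1&*\\0&1\end{smallmatrix}\right)$ on $Z_{2n}\oplus Z_{2n-1}$ and is therefore a single elementary transformation of $C_{2n}$. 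The paper reaches the same conclusion more directly by conjugating both $\Phi_{C,s}$ and $\Phi_{C,s'}$ through the $\phi_{s,n}$ and observing that $\phi_{s,n}\phi_{s',n}^{-1}=\left(\begin{smallmatrix}1&\delta ss'\epsilon\\0&1\end{smallmatrix}\right)$. With that splitting supplied, your route closes up and is a reasonable alternative presentation; without it, the argument stops precisely at the decisive point.
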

\begin{proof}
Take an embedding of $\mcal{B}$ into an abelian category $\overline{\mcal{B}}$ by Freyd-Mitchell theorem.
Let $Z_n\in \overline{\mcal{B}}$ be the kernel of $d\colon C_n\to C_{n-1}$.
Then $C$ decomposes into short exact sequences
\[
\xymatrix@1{
	0 \ar[r] & Z_n \ar[r]^-\epsilon & C_n \ar[r]^-\delta & Z_{n-1} \ar[r] & 0,
}
\]
and $s\epsilon\colon Z_{n-1}\to C_n$ or $\delta s\colon C_n\to Z_n$ give splits of these short exact sequences.
Hence, the map
\[
	\phi_{s,n}:=(\delta s,\delta)\colon C_n\xrightarrow{\simeq} Z_n\oplus Z_{n-1},
\]
is an isomorphism with the inverse $\phi_{s,n}^{-1}=(\epsilon,s\epsilon)$.

Now, it is clear that there is an elementary transformation $\gamma_n$ of $Z_n\oplus Z_{n-1}$ such that $\phi_{s,n}=\gamma_n\phi_{s',n}$.

On the other hand, $\phi_{s,n}$'s give isomorphisms
\[
	C_{2*+1} \xrightarrow[\simeq]{\phi_{s,2*+1}} Z_* \xrightarrow[\simeq]{\phi_{s,2*}^{-1}} C_{2*},
\]
and the composite is equal to $\Phi_{C,s}$ because $\epsilon\delta=d$ and $s\epsilon\delta s=sds=s$.
Therefore, there exists an elementary transformation $\gamma$ such that $\Phi_{C,s}=\gamma\Phi_{C',s}$.
\end{proof}

Let $P,Q$ be bounded complexes in $\mcal{A}$ and $\alpha$ a homotopy equivalence $F(P)\xrightarrow{\sim}F(Q)$.
We apply the above construction to $\cone\alpha$.
Set $\Phi:=\Phi_{\cone\alpha,s}$ for some strict split $s$ of $\cone\alpha$.
Thanks to Lemma \ref{lem:strictsplit} and Corollary \ref{cor:elementary}, the following is well-defined.

\begin{definition}\label{def:chi}
We define the \textit{Euler characteristic of $(P,\alpha,Q)$} by
\[
	\chi(P,\alpha,Q) := \Bigl[\Bigl(\bigoplus_n(P_{2n}\oplus Q_{2n+1}),\Phi,\bigoplus_n(P_{2n-1}\oplus Q_{2n})\Bigr)\Bigr]\in K_0(F).
\]
\end{definition}

Here are first properties of the Euler characteristic.
\begin{lemma}\label{lem:chi-first}
Let $P,Q$ be bounded complexes in $\mcal{A}$ with a homotopy equivalence $\alpha\colon F(P)\xrightarrow{\sim}F(Q)$.
\begin{enumerate}[label={\upshape(\roman*)}]
\item Let $P',Q'\in\msf{Ch}^b(\mcal{A})$ with isomorphisms of complexes $\gamma\colon F(P)\xrightarrow{\simeq}F(P')$ and $\delta\colon F(Q)\xrightarrow{\simeq}F(Q')$.
If $[(P_n,\gamma_n,P_n')]=[(Q_n,\delta_n,Q'_n)]=0$ for all $n$, then
\[
	\chi(P',\delta\alpha\gamma^{-1},Q')=\chi(P,\alpha,Q).
\]
\item For every integer $n$, $\chi(P[n],\alpha[n],Q[n])=(-1)^n\chi(P,\alpha,Q)$.
\end{enumerate}
\end{lemma}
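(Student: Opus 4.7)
The plan is to prove each statement by transporting a strict split through a natural chain isomorphism, comparing the resulting $\Phi$-maps, and absorbing the discrepancy using the defining relations of $K_0(F)$.

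For (i), I would first observe that, since $\gamma$ and $\delta$ are strict chain isomorphisms, they induce a strict chain isomorphism
\[
	\Psi\colon \cone(\alpha)\xrightarrow{\simeq}\cone(\delta\alpha\gamma^{-1})
\]
whose $n$-th component is the block-diagonal map $(\gamma_{n-1},\delta_n)$. Any strict split $s$ of $\cone(\alpha)$ transports to the strict split $s'=\Psi s\Psi^{-1}$ of $\cone(\delta\alpha\gamma^{-1})$, and the associated maps satisfy $\Phi_{s'}=v\Phi_s u^{-1}$, where $u\colon F(A)\xrightarrow{\simeq}F(A')$ and $v\colon F(B)\xrightarrow{\simeq}F(B')$ are the block-diagonal isomorphisms in $\mcal{B}$ assembled from the components of $\gamma$ and $\delta$ on odd and even indices respectively. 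Applying relation (b) of Definition \ref{def:K_0(F)} twice gives
\[
	\chi(P',\delta\alpha\gamma^{-1},Q') = [(A',u^{-1},A)] + [(A,\Phi_s,B)] + [(B,v,B')].
\]
The outer two terms decompose, via relation (a) applied to the direct-sum splittings, into finite sums of the termwise classes $[(P_n,\gamma_n,P_n')]$ and $[(Q_n,\delta_n,Q_n')]$, each of which vanishes by hypothesis; the middle term is $\chi(P,\alpha,Q)$.

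For (ii), I would reduce to $n=1$ and iterate. The key observation is the involution
\[
	\Phi_{C,s}^{2} = (s+d)^2 = sd+ds = 1,
\]
so that, viewed as an endomorphism of $F(A)\oplus F(B)=\bigoplus_m C_m$ (with $C=\cone\alpha$), $\Phi_{C,s}$ restricts to mutually inverse isomorphisms $F(A)\xrightarrow{\simeq}F(B)$ and $F(B)\xrightarrow{\simeq}F(A)$. Shifting $\cone\alpha$ to $\cone(\alpha[1])\cong\cone(\alpha)[1]$ swaps the parities of indices and thereby exchanges the roles of $A$ and $B$. With the standard shift convention $d[1]=-d$, the map $-s$ is a strict split of $\cone(\alpha)[1]$, and the resulting $\Phi_{-s}$ equals $-\Phi_{C,s}^{-1}$ as an isomorphism $F(B)\xrightarrow{\simeq}F(A)$. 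By relation (b),
\[
	\chi(P[1],\alpha[1],Q[1]) = [(B,-\Phi^{-1},A)] = [(B,-1_B,B)] + [(B,\Phi^{-1},A)].
\]
The first summand vanishes by Lemma \ref{lem:K_0(F)exact}(ii), since $-1_B$ is a genuine automorphism of $B$ in $\mcal{A}$; the second equals $-\chi(P,\alpha,Q)$ by Lemma \ref{lem:K_0(F)exact}(iii).

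The main obstacle is not conceptual but bookkeeping: pinning down the signs forced by the cone and shift conventions in (ii), and checking in (i) that $\Psi$ indeed transports a strict split to a strict split in a way that yields $\Phi_{s'}=v\Phi_s u^{-1}$ on the nose.
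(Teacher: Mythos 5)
Your proof is correct and follows essentially the same route as the paper: part (i) by conjugating $\Phi$ through the block-diagonal chain isomorphism of cones and cancelling the two outer terms via relation (b), direct-sum additivity, and the hypothesis; part (ii) by observing that the shift swaps the parities and turns $\Phi$ into $-\Phi^{-1}$, then invoking Lemma \ref{lem:K_0(F)exact} (ii)--(iii). The only cosmetic difference is in (ii), where the paper absorbs the residual signs by passing through $\chi(P,-\alpha,Q)$ and then applying part (i), whereas you peel off $[(B,-1_B,B)]=0$ directly; both are valid.
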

\begin{proof}
(i) $\gamma:=(\gamma,\delta)$ gives an isomorphism of complexes $\cone\alpha\xrightarrow{\simeq}\cone(\delta\alpha\gamma^{-1})$.
Hence, $\Phi_{\cone(\delta\alpha\gamma^{-1})}$ is equal to the composite
\begin{multline*}
	F(P'_{2*})\oplus F(Q'_{2*+1}) \xrightarrow{(\gamma_{2*}^{-1},\delta_{2*+1}^{-1})} F(P_{2*})\oplus F(Q_{2*+1}) \\
		\xrightarrow{\Phi_{\cone\alpha}} F(P_{2*-1})\oplus F(Q_{2*}) \xrightarrow{(\gamma_{2*-1},\delta_{2*})} F(P'_{2*-1})\oplus F(Q'_{2*}).
\end{multline*}
It follows from the assumption that
\[
	(P'_{2*}\oplus Q'_{2*+1},(\gamma_{2*}^{-1},\delta_{2*+1}^{-1}),P_{2*}\oplus Q_{2*+1})
	= (P_{2*-1}\oplus Q_{2*},(\gamma_{2*-1},\delta_{2*}),P'_{2*-1}\oplus Q'_{2*}) = 0,
\]
and thus $\chi(P',\delta\alpha \gamma^{-1},Q') = \chi(P,\alpha,Q)$.

(ii) By the construction,
\[
	\chi(P[1],\alpha[1],Q[1]) = [(P_{2*}\oplus Q_{2*+1},-\Phi^{-1}_{\cone(-\alpha)},P_{2*-1}\oplus Q_{2*})].
\]
By Lemma \ref{lem:K_0(F)tri} (iv), the right hand side equals to
\[
	-[(P_{2*-1}\oplus Q_{2*},\Phi_{\cone(-\alpha)},P_{2*}\oplus Q_{2*+1})] = -\chi(P,-\alpha,Q),
\]
which equals to $-\chi(P,\alpha,Q)$ by (i).
\end{proof}

We will use the following lemma on homological algebra.
\begin{lemma}\label{lem:halg}
Let $\mcal{C}$ be an additive category.
Suppose given a commutative diagram
\[
\xymatrix{
	A' \ar@{ >->}[r]^{f_1} \ar@{->>}[d]^{d'} & A \ar@{->>}[r]^{g_1} \ar@{->>}[d]^d & A'' \ar@{->>}[d]^{d''} \\
	B' \ar@{ >->}[r]^{f_2} 			   		 & B \ar@{->>}[r]^{g_2} 			   & B''
}
\]
in $\mcal{C}$ such that the rows are split exact sequences and that $d',d''$ are split epimorphisms.
Let $s',s''$ be splits of $d',d''$.
Then there exists $\tilde{s}\colon B\to A$ such that $\tilde{s}f_2=f_1s'$, $g_1\tilde{s}=s''g_2$ and $\gamma:=d\tilde{s}$ is an elementary transformation of $B$.
In particular, $s:=\tilde{s}\gamma^{-1}$ is a split of $d$.
\end{lemma}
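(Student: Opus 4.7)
The plan is to use the split exactness of the rows to identify $A$ and $B$ with direct sums and then define $\tilde s$ block-diagonally. Pick a section $h_1 \colon A'' \to A$ of $g_1$ and a section $h_2 \colon B'' \to B$ of $g_2$, so that $A \simeq A'\oplus A''$ via $(f_1,h_1)$ and $B \simeq B'\oplus B''$ via $(f_2,h_2)$. Under these identifications, the two commutativity squares $df_1 = f_2 d'$ and $g_2 d = d''g_1$ force $d$ to be upper triangular,
\[
	d = \begin{pmatrix} d' & x \\ 0 & d'' \end{pmatrix},
\]
for some $x \colon A'' \to B'$ (explicitly $x$ is the $B'$-component of $dh_1$).

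Next I set
\[
	\tilde s := \begin{pmatrix} s' & 0 \\ 0 & s'' \end{pmatrix}\colon B'\oplus B'' \to A'\oplus A''.
\]
A direct matrix calculation gives $\tilde s f_2 = f_1 s'$ and $g_1 \tilde s = s'' g_2$, which are the two required commutativities. Computing
\[
	\gamma := d\tilde s = \begin{pmatrix} d's' & xs'' \\ 0 & d''s'' \end{pmatrix} = \begin{pmatrix} 1 & xs'' \\ 0 & 1 \end{pmatrix}
\]
exhibits $\gamma$ as an elementary transformation of $B = B'\oplus B''$ in the sense of Definition \ref{def:elementary}, hence in particular as an automorphism of $B$. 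Then $d\cdot(\tilde s \gamma^{-1}) = \gamma\gamma^{-1} = 1$, so $s := \tilde s\gamma^{-1}$ is the desired split of $d$.

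I do not anticipate any real obstacle: the argument reduces to a small block-matrix computation once ambient splittings have been chosen, and the two constraints imposed on $\tilde s$ sit in independent blocks of its matrix, so the block-diagonal choice satisfies them simultaneously. The only point worth emphasising is that the upper-triangular form of $d$ is forced by the commutativity of the original diagram alone, so no extra hypothesis on $\mcal{C}$ or on the chosen decompositions is needed.
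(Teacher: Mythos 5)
Your proof is correct and is essentially the paper's own argument in matrix notation: the paper's map $\tilde{s}:=f_1s'b+as''g_2$ (with $b$ a retraction of $f_2$ and $a$ a section of $g_1$) is exactly your block-diagonal $\begin{pmatrix} s' & 0 \\ 0 & s'' \end{pmatrix}$ under the chosen identifications $A\simeq A'\oplus A''$, $B\simeq B'\oplus B''$. The only cosmetic difference is that the paper verifies $\gamma=d\tilde{s}$ is elementary by checking $\gamma f_2=f_2$ and $g_2\gamma=g_2$, whereas you compute $\gamma=\begin{pmatrix} 1 & xs'' \\ 0 & 1 \end{pmatrix}$ directly; both are fine.
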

\begin{proof}
Let $a$ and $b$ are splits of $g_1$ and $f_2$ respectively;
\[
\xymatrix{
	A' \ar@{ >->}[r]_{f_1} \ar@{->>}[d] & A \ar@{->>}[r] \ar@{->>}[d]^d 	   & A'' \ar@{->>}[d] \ar@/_1pc/[l]_a  \\
	B' \ar@{ >->}[r] \ar@/^1pc/[u]^{s'} & B \ar@{->>}[r]^{g_2} \ar@/^1pc/[l]^b & B'' \ar@/_1pc/[u]_{s''}.
}
\]
We define
\[
	\tilde{s}:=f_1s'b+as''g_2\colon B\to A.
\]
Then $\tilde{s}f_2=f_1s'$ and $g_1\tilde{s}=s''g_2$.
Set $\gamma:=d\tilde{s}$.
Then $\gamma f_2=f_2$ and $g_2\gamma=g_2$, which implies that $\gamma$ is an elementary transformation of $B$.
\end{proof}

\subsection{Euler characteristic II}\label{EulerII}

In this subsection, we prove that the Euler characteristic defined in Definition \ref{def:chi} gives a group homomorphism $\chi\colon K_0(\msf{D}(F))\to K_0(F)$.

\begin{lemma}\label{lem:chi-exact}
Suppose we are given exact sequences
\[
	\xymatrix@1{P' \ar@{ >->}[r] & P \ar@{->>}[r] & P''}, \quad \xymatrix@1{Q' \ar@{ >->}[r] & Q \ar@{->>}[r] & Q''}
\]
in $\msf{Ch}^b(\mcal{A})$ and homotopy equivalences $\alpha,\alpha',\alpha''$ fitting into a commutative diagram
\[
\xymatrix{
	F(P') \ar[d]^{\alpha'}_\sim \ar@{ >->}[r] & F(P) \ar[d]^\alpha_\sim \ar@{->>}[r] & F(P'') \ar[d]^{\alpha''}_\sim \\
	F(Q') \ar@{ >->}[r] 					  & F(Q) \ar@{->>}[r] 					 & F(Q'').
}
\]
Then
\[
	\chi(P,\alpha,Q) = \chi(P',\alpha',Q') + \chi(P'',\alpha'',Q'').
\]
\end{lemma}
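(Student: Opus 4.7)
The approach is to exhibit a short exact sequence in $\Rel(F)$ whose three terms represent $\chi(P',\alpha',Q')$, $\chi(P,\alpha,Q)$, and $\chi(P'',\alpha'',Q'')$, up to an elementary-transformation correction of the middle term's isomorphism. By Corollary \ref{cor:elementary}, such a correction does not affect the $K_0$-class, and relation (a) from Definition \ref{def:K_0(F)} then yields the additivity statement.

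First, I would apply the $3\times 3$ lemma to the given commutative square of chain maps to produce a short exact sequence of bounded complexes in $\mcal{B}$
\[
	0 \to \cone\alpha' \to \cone\alpha \to \cone\alpha'' \to 0,
\]
which is split in every degree since $\mcal{B}$ is split exact; the induced sequences of cycles $0 \to Z'_n \to Z_n \to Z''_n \to 0$ are therefore also short exact. Next, I would choose strict splits $s'$ of $\cone\alpha'$ and $s''$ of $\cone\alpha''$ and construct a compatible strict split $s$ of $\cone\alpha$ as follows. Recalling from the proof of Lemma \ref{lem:strictsplit} that a strict split on an acyclic bounded complex corresponds to a coherent family of splittings of the degree-wise sequences $0\to Z_n\to C_n\to Z_{n-1}\to 0$, I would apply Lemma \ref{lem:halg} in each degree to the $3\times 3$ diagram formed by these cycle sequences. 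This lifts the given splittings for $\cone\alpha'$ and $\cone\alpha''$ to splittings for $\cone\alpha$, compatibly up to an elementary transformation in each degree. Packaging the degree-wise splittings produces the desired $s$, and the associated isomorphism $\Phi:=\Phi_{\cone\alpha,s}$ fits, after left-multiplication by an elementary transformation $\gamma$ of $F(\bigoplus_n P_{2n-1}\oplus Q_{2n})$, into a commutative diagram in $\mcal{B}$
\[
\xymatrix{
F(\bigoplus_n P'_{2n}\oplus Q'_{2n+1}) \ar@{ >->}[r] \ar[d]^{\Phi'} & F(\bigoplus_n P_{2n}\oplus Q_{2n+1}) \ar[d]^{\gamma\Phi} \ar@{->>}[r] & F(\bigoplus_n P''_{2n}\oplus Q''_{2n+1}) \ar[d]^{\Phi''} \\
F(\bigoplus_n P'_{2n-1}\oplus Q'_{2n}) \ar@{ >->}[r] & F(\bigoplus_n P_{2n-1}\oplus Q_{2n}) \ar@{->>}[r] & F(\bigoplus_n P''_{2n-1}\oplus Q''_{2n})
}
\]
whose rows are the $F$-image of degree-wise exact sequences in $\mcal{A}$ obtained from $P'\rightarrowtail P\twoheadrightarrow P''$ and $Q'\rightarrowtail Q\twoheadrightarrow Q''$. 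This defines an exact sequence in $\Rel(F)$, so relation (a) combined with Corollary \ref{cor:elementary} gives the claimed identity.

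The main obstacle is the construction of the compatible strict split $s$ of $\cone\alpha$ in the second step. Lemma \ref{lem:halg} only produces a lift of splittings modulo an elementary transformation, and one must track these corrections carefully to ensure that the resulting $s$ is genuinely strict (i.e.\ satisfies $s^2=0$) and that all the elementary transformations that appear at the various degrees can be absorbed into the single global correction $\gamma$ addressed by Corollary \ref{cor:elementary}. The strict-split condition itself turns out to be automatic once one rewrites $C_n\simeq Z_n\oplus Z_{n-1}$ via the induced splittings, so the technical heart of the argument is the degree-by-degree bookkeeping of the elementary-transformation corrections introduced by Lemma \ref{lem:halg}, together with the verification that $\gamma\Phi$ really does fit in the displayed square.
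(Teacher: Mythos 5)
Your proposal is correct and follows essentially the same route as the paper: the paper also decomposes $\cone\alpha',\cone\alpha,\cone\alpha''$ degreewise via their cycle objects, applies Lemma \ref{lem:halg} to lift the chosen splits $s',s''$ to a map $\tilde{s}$ for $\cone\alpha$ compatible up to an elementary transformation, assembles the resulting isomorphisms into an exact sequence in $\Rel(F)$, and concludes by relation (a) together with Corollary \ref{cor:elementary}. The only cosmetic difference is that the paper works directly with $\tilde{\Phi}:=\tilde{\phi}_{2*}^{-1}\tilde{\phi}_{2*+1}$ and observes afterwards that it agrees with $\Phi_{\cone\alpha,s}$ modulo elementary transformations, rather than first repairing $\tilde{s}$ into a genuine strict split.
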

\begin{proof}
We have a commutative diagram
\[
\xymatrix{
	Z_n' \ar@{ >->}[r] \ar@{ >->}[d]^{\epsilon'} & Z_n \ar@{->>}[r] \ar@{ >->}[d]^\epsilon & Z_n'' \ar@{ >->}[d]^{\epsilon''} \\
	F(P_{n-1}')\oplus F(Q_n') \ar@{ >->}[r] \ar@{->>}[d]^{\delta'} & F(P_{n-1})\oplus F(Q_n) \ar@{->>}[r] \ar@{->>}[d]^\delta 
		& F(P_{n-1}'')\oplus F(Q_n'') \ar@{->>}[d]^{\delta''} \\
	Z_{n-1}' \ar@{ >->}[r] & Z_{n-1} \ar@{->>}[r] & Z_{n-1}''
}
\]
with exact rows and columns, where $Z_*,Z_*',Z_*''$ are the kernels of the differentials of $\cone\alpha,\cone\alpha',\cone\alpha''$.
We take splits $s'$ and $s''$ of $\delta'$ and $\delta''$.
Let $\tilde{s}$ be a map $Z_{n-1}\to F(P_{n-1})\oplus F(Q_n)$ as in Lemma \ref{lem:halg}, and set $\gamma:=\delta\tilde{s}$, $s:=\tilde{s}\gamma$.
Then $\gamma$ is an elementary transformation and $s$ is a split of $\delta$.

Now, we have isomorphisms $\tilde{\phi}_n,\phi_n',\phi_n''$ fitting into a commutative diagram
\[
\xymatrix{
	Z_n'\oplus Z_{n-1}' \ar[d]^{\phi_n'=(\epsilon',s')}_\simeq \ar@{ >->}[r] & Z_n\oplus Z_{n-1} \ar[d]^{\tilde{\phi}_n=(\epsilon,\tilde{s})}_\simeq \ar@{->>}[r]
		& Z_n''\oplus Z_{n-1}'' \ar[d]^{\phi_n''=(\epsilon'',s'')}_\simeq \\
	F(P_{n-1}')\oplus F(Q_n') \ar@{ >->}[r] & F(P_{n-1})\oplus F(Q_n) \ar@{->>}[r] & F(P_{n-1}'')\oplus F(Q_n'').
}
\]
Set
\[
	\Phi':=(\phi'_{2*})^{-1}\phi'_{2*+1}, \quad \Phi'':=(\phi''_{2*})^{-1}\phi''_{2*+1}, \quad \tilde{\Phi}:=\tilde{\phi}_{2*}^{-1}\tilde{\phi}_{2*+1}.
\]
Then we obtain a sequence in $\Rel(F)$
\[
	(P_{2*}'\oplus Q_{2*+1}',\Phi',P_{2*-1}'\oplus Q_{2*}') \to (P_{2*}\oplus Q_{2*+1},\tilde{\Phi},P_{2*-1}\oplus Q_{2*}) 
		\to (P_{2*}''\oplus Q_{2*+1}'',\Phi'',P_{2*-1}'\oplus Q_{2*}''),
\]
and it is an exact sequence since the given exact sequences are degree-wise exact.

On the other hand, $\tilde{\Phi}$ is equal to $\Phi:=\phi_{2*}^{-1}\phi_{2*+1}$, $\phi_n:=(\epsilon,s)$, modulo elementary transformations.
Therefore,
\[
\begin{split}
	\chi(P,\alpha,Q) &= [(P_{2*}\oplus Q_{2*+1},\Phi,P_{2*-1}\oplus Q_{2*})] \\
					 &= [(P_{2*}\oplus Q_{2*+1},\tilde{\Phi},P_{2*-1}\oplus Q_{2*})] \\
					 &= [(P_{2*}'\oplus Q_{2*+1}',\Phi',P_{2*-1}'\oplus Q_{2*}')] + [(P_{2*}''\oplus Q_{2*+1}'',\Phi'',P_{2*-1}'\oplus Q_{2*}'')] \\
					 &= \chi(P',\alpha',Q') + \chi(P'',\alpha'',Q'').
\end{split}
\]
\end{proof}

\begin{lemma}\label{lem:chi-H}
Let $P,Q$ be bounded complexes in $\mcal{A}$ with a homotopy equivalence $\alpha\colon F(P)\xrightarrow{\sim}F(Q)$ such that $H_*X,H_*Y\in\mcal{A}$.
Then we have
\[
	\chi(P,\alpha,Q) = \sum_i(-1)^i [(H_iP,H_i\alpha,H_iQ)].
\]
\end{lemma}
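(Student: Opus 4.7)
The strategy is to use the split exactness of $\mcal{B}$ to decompose $F(P)$ and $F(Q)$ into their homology (with zero differentials) plus contractible disk complexes, and then compute $\chi$ summand by summand.

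First I would propagate the hypothesis $H_*P, H_*Q\in\mcal{A}$. Using that $\mcal{A}$ is closed under extensions and kernels of surjections, a decreasing induction starting from the top degree (where $Z_NP = H_NP$) shows that every cycle $Z_iP, Z_iQ$ and boundary $B_iP, B_iQ$ also lies in $\mcal{A}$. Applying the exact functor $F$ therefore turns the short exact sequences $0\to Z_iP\to P_i\to B_{i-1}P\to 0$ and $0\to B_iP\to Z_iP\to H_iP\to 0$ into exact sequences in $\mcal{B}$, which split by split exactness. Choosing such splittings yields a chain homotopy equivalence
\[
	F(P) \;\simeq\; F(H_*P) \oplus \bigoplus_i D_i^P
\]
in $\msf{Ch}^b(\mcal{B})$, where $F(H_*P)$ is the complex with $F(H_iP)$ in degree $i$ and zero differentials, and each $D_i^P$ is a disk $F(B_iP)\xrightarrow{=}F(B_iP)$ concentrated in degrees $i{+}1,i$; the analogous decomposition holds for $F(Q)$. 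Transporting $\alpha$ across these equivalences and using that chain maps between zero-differential complexes are determined degreewise (up to homotopy), one further homotopes $\alpha$ to a block-diagonal form whose homology block is $\bigoplus_i H_i\alpha$. Lemma~\ref{lem:chi-first}(i) and Corollary~\ref{cor:elementary} ensure that these modifications change $\chi(P,\alpha,Q)$ only by trivial classes in $K_0(F)$.

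On the homology block, the triple $(H_iP[i],H_i\alpha[i],H_iQ[i])$ has one-term complexes, and a direct computation of $\cone$ and its strict split (for the unshifted case $(H_iP,H_i\alpha,H_iQ)$, combined with Lemma~\ref{lem:K_0(F)exact}(iii) and Lemma~\ref{lem:chi-first}(ii) to absorb the sign from the shift) yields $\chi(H_iP[i],H_i\alpha[i],H_iQ[i]) = (-1)^i[(H_iP,H_i\alpha,H_iQ)]$. Each disk block $D_i^P \xrightarrow{\sim} D_i^Q$ contributes $0$, because its cone admits a canonical strict split whose associated $\Phi$ is an identity, giving a class of the form $[(M,\mrm{id},M)] = 0$ by Lemma~\ref{lem:K_0(F)exact}(ii). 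Summing the contributions over $i$ gives the claimed formula.

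The hard part is making the decomposition rigorous: the splittings in $\mcal{B}$ are non-canonical, so one must carefully verify that transporting $\alpha$ and homotoping it to block-diagonal form perturbs $\chi$ only by identity-type classes and by elementary transformations, both of which vanish in $K_0(F)$ by Lemma~\ref{lem:K_0(F)exact}(ii) and Corollary~\ref{cor:elementary}. The split exactness of $\mcal{B}$ is essential throughout, since without it the chosen exact sequences of cycles and boundaries in $\mcal{A}$ would not split after applying $F$, and the homology-versus-disk decomposition would break down.
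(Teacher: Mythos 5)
Your strategy is sound and genuinely different from the paper's. The paper shifts to reduce to non-negatively graded complexes, then uses the good-truncation exact sequences together with Lemma~\ref{lem:chi-exact} and induction to reduce to a two-term complex with injective differential, and finishes by an explicit analysis of the cone using Lemma~\ref{lem:halg}. You instead use split exactness of $\mcal{B}$ to decompose $F(P)$ and $F(Q)$ globally into homology plus disks and compute blockwise. Your route makes the final answer transparent (homology blocks give $(-1)^i[(H_iP,H_i\alpha,H_iQ)]$, disks give $0$) and only needs additivity of $\chi$ over direct sums rather than the full strength of Lemma~\ref{lem:chi-exact}; the paper's route avoids having to control an arbitrary homotopy equivalence between the decomposed complexes. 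One small correction: since the chosen splittings are compatible with the differentials, the decomposition $F(P)\cong F(H_*P)\oplus\bigoplus_i D_i^P$ is an \emph{isomorphism of complexes}, not merely a homotopy equivalence --- and you need it to be one, because Lemma~\ref{lem:chi-first}(i) only applies to isomorphisms of complexes.

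Two of your ``trivial class'' claims need more care than the references you give. First, the degreewise class $[(P_n,\gamma_n,H_nP\oplus B_nP\oplus B_{n-1}P)]$ is \emph{not} covered by Lemma~\ref{lem:K_0(F)exact}(ii) or Corollary~\ref{cor:elementary}: the splitting isomorphism $\gamma_n$ is not of the form $F(\text{iso in }\mcal{A})$ and is not an elementary endomorphism. It does vanish, but via relation (a): the exact sequences $Z_nP\rightarrowtail P_n\twoheadrightarrow B_{n-1}P$ and $B_nP\rightarrowtail Z_nP\twoheadrightarrow H_nP$ live in $\mcal{A}$ and exhibit $(P_n,\gamma_n,\,\cdot\,)$ as an extension of triples with identity gluing data, so its class is $0$. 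Second, and more seriously, when you ``homotope $\alpha$ to block-diagonal form'' you must not invoke Corollary~\ref{cor:lemmas}(ii): in the paper that statement is \emph{deduced from} this lemma, so quoting it here is circular. There is a direct argument: if $\alpha=\beta+dh+hd$, then $\left(\begin{smallmatrix}1&0\\ h&1\end{smallmatrix}\right)$ is an isomorphism of complexes $\cone\alpha\xrightarrow{\simeq}\cone\beta$ which is an elementary transformation in each degree, so transporting a strict split and applying Corollary~\ref{cor:elementary} gives $\chi(P,\alpha,Q)=\chi(P,\beta,Q)$; you should make this explicit. Finally, in the disk computation the canonical $\Phi$ is not the identity but a matrix of the form $\left(\begin{smallmatrix}g&1\\ -1&0\end{smallmatrix}\right)$; it still gives the zero class, being an elementary transformation composed with $F$ of an isomorphism in $\mcal{A}$. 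With these repairs the argument goes through.
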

\begin{proof}
Since $\chi(P[1],\alpha[1],Q[1])]=-\chi(P,\alpha,Q)$ by Lemma \ref{lem:chi-first} (ii), we may assume that $P_i=Q_i=0$ for $i<0$.
It is easy to see from our assumptions that the kernels and the images of $d_P\colon P_n\to P_{n-1}$ and $d_Q\colon Q_n\to Q_{n-1}$ are in $\mcal{A}$.
Now, we have an exact sequence
\[
\xymatrix@1{
	(\tau_{\ge n+1}P,\tau_{\ge n+1}\alpha,\tau_{\ge n+1}Q) \ar@{ >->}[r] & (P,\alpha,Q) \ar@{->>}[r] & (\tau_{\le n}P,\tau_{\le n}\alpha,\tau_{\le n}Q).
}
\]
By Lemma \ref{lem:chi-exact} and by induction, we may assume that $P_i=Q_i=0$ for $i\ge 2$ and that $d_P\colon P_1\to P_0$ and $d_Q\colon Q_1\to Q_0$ are admissible monomorphisms.
Now, the cone of $\alpha$ has the form
\[
	F(P_1) \to F(P_0)\oplus F(Q_1) \to F(Q_0),
\]
which fits into a commutative diagram
\[
\xymatrix{
									& F(P_1) \ar@{=}[r] \ar@{ >->}[d]^\epsilon 			   & F(P_1) \ar@{ >->}[d]^d \\
	F(Q_1) \ar@{ >->}[r] \ar@{=}[d] & F(P_0)\oplus F(Q_1) \ar@{->>}[d]^\delta \ar@{->>}[r] & F(P_0) \ar@{->>}[d]^{\bar{\alpha}} \\
	F(Q_1) \ar@{ >->}[r] 			& F(Q_0) \ar@{->>}[r] 			  					   & F(H_0(Q))
}
\]
with exact rows and columns.
We take a split $s''$ of $\bar{\alpha}$, and take $\tilde{s}\colon F(Q_0)\to F(P_0)\oplus F(Q_1)$ as in Lemma \ref{lem:halg}.
Then we have a commutative diagram
\[
\xymatrix{
	F(Q_1) \ar@{ >->}[r] \ar@{=}[d] & F(P_0)\oplus F(Q_1) \ar[d]^{\tilde{\phi}:=(\epsilon,\tilde{s})}_\simeq \ar@{->>}[r] & F(P_0) \ar[d]^{\psi:=(d,s'')}_\simeq \\
	F(Q_1) \ar@{ >->}[r] 			& F(Q_0)\oplus F(P_1) \ar@{->>}[r] 		 											  & F(H_0(Q))\oplus F(P_1).
}
\]
Since the rows lift canonically to exact sequences in $\mcal{A}$, we have
\[
	\chi(P,\alpha,Q)=[(P_0\oplus Q_1,\tilde{\phi},P_1\oplus Q_0)]=[(P_0,\psi,H_0(Q)\oplus P_1)].
\]
Finally, it follows from the exact sequence
\[
\xymatrix@1{
	(P_1,1,P_1) \ar@{ >->}[r] & (P_0,\psi,H_0(Q)\oplus P_1) \ar@{->>}[r] & (H_0(P),H_0(\alpha),H_0(Q))
}
\]
that
\[
	[(P_0,\psi,H_0(Q)\oplus P_1)]=[(H_0(P),H_0(\alpha),H_0(Q))].
\]
\end{proof}

\begin{corollary}\label{cor:lemmas}
\leavevmode
\begin{enumerate}[label={\upshape(\roman*)}]
\item Let $f\colon P\to P'$ and $g\colon Q\to Q'$ be quasi-isomorphisms of bounded complexes in $\mcal{A}$
with homotopy equivalences $\alpha\colon F(P)\xrightarrow{\sim}F(Q)$ and $\alpha'\colon F(P')\xrightarrow{\sim}F(Q')$ such that $\alpha'F(f)=F(g)\alpha$.
Then
\[
	\chi(P,\alpha,Q)=\chi(P',\alpha',Q').
\]
\item Let $P,Q$ be bounded complexes in $\mcal{A}$ with a homotopy equivalence $\alpha\colon F(P)\xrightarrow{\sim} F(Q)$.
Suppose that $\alpha$ is homotopic to another homotopy equivalence $\beta\colon F(P)\xrightarrow{\sim}F(Q)$.
Then
\[
	\chi(P,\alpha,Q)=\chi(P,\beta,Q).
\]
\end{enumerate}
\end{corollary}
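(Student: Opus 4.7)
For part (i), I would reduce to Lemmas \ref{lem:chi-exact} and \ref{lem:chi-H} via mapping cylinders. Form $\mrm{Cyl}(f),\mrm{Cyl}(g)\in\msf{Ch}^b(\mcal{A})$ together with their standard degreewise split short exact sequences
\[
P\rightarrowtail\mrm{Cyl}(f)\twoheadrightarrow\cone(f), \qquad \cone(1_P)\rightarrowtail\mrm{Cyl}(f)\twoheadrightarrow P',
\]
and analogous ones for $g$. Since $\alpha'F(f)=F(g)\alpha$ holds strictly, applying $F$ and assembling componentwise produces a chain map $\tilde\alpha\colon F(\mrm{Cyl}(f))\to F(\mrm{Cyl}(g))$ that restricts to $\alpha$ on $F(P)$, projects to $\alpha'$ on $F(P')$, and is itself a homotopy equivalence; it also induces homotopy equivalences $\bar\alpha$ between the $\cone$'s and $\alpha_0$ between the $\cone(1_{-})$'s. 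These assemble into two exact sequences in $\Rel(F)$, and applying Lemma \ref{lem:chi-exact} to both gives
\[
\chi(P,\alpha,Q)+\chi(\cone(f),\bar\alpha,\cone(g))=\chi(\mrm{Cyl}(f),\tilde\alpha,\mrm{Cyl}(g))=\chi(\cone(1_P),\alpha_0,\cone(1_Q))+\chi(P',\alpha',Q').
\]
Because $f$ and $g$ are quasi-isomorphisms, all four cone complexes are acyclic bounded complexes with zero homology, so Lemma \ref{lem:chi-H} forces the two $\cone$-terms to vanish, yielding $\chi(P,\alpha,Q)=\chi(P',\alpha',Q')$.

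For part (ii), fix a chain homotopy $h$ with $\alpha-\beta=d_{F(Q)}h+hd_{F(P)}$. The lower-triangular blocks $\phi_n=\bigl(\begin{smallmatrix}1&0\\h_{n-1}&1\end{smallmatrix}\bigr)$ assemble into a chain isomorphism $\phi\colon\cone(\alpha)\xrightarrow{\simeq}\cone(\beta)$, with $\phi d_\alpha=d_\beta\phi$ being precisely the homotopy relation. For any strict split $s$ of $\cone(\alpha)$, the conjugate $s':=\phi s\phi^{-1}$ is a strict split of $\cone(\beta)$, and unwinding the definition of $\Phi$ yields
\[
\Phi_{\cone(\beta),s'}=\Phi_{\mrm{ev}}(\phi)\circ\Phi_{\cone(\alpha),s}\circ\Phi_{\mrm{odd}}(\phi)^{-1},
\]
where $\Phi_{\mrm{ev}}(\phi):=\bigoplus_n\phi_{2n}$ and $\Phi_{\mrm{odd}}(\phi):=\bigoplus_n\phi_{2n+1}$ are block-diagonal sums of elementary transformations. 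Applying Corollary \ref{cor:elementary} twice, once for left and once for right multiplication (the latter follows from relation (b) of Definition \ref{def:K_0(F)} together with the vanishing $[(R,\gamma,R)]=0$ for $\gamma\in E(F(R))$), gives $\chi(P,\beta,Q)=\chi(P,\alpha,Q)$.

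The main obstacle I foresee is the bookkeeping in part (i): pinning down mapping cylinder conventions so that $\tilde\alpha$ is a strict chain map with the stated restrictions and projections (not merely correct up to homotopy), and verifying the exactness of the two sequences in $\Rel(F)$ with all four maps $\alpha,\alpha',\bar\alpha,\alpha_0$ compatible on the nose. Part (ii) is algebraically cleaner; its only subtle point is the right-multiplication version of Corollary \ref{cor:elementary}, which is a short consequence of the \S1 results but is not stated there explicitly.
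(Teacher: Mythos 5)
Your argument is correct. For (i) you and the paper use the same two key lemmas (Lemma \ref{lem:chi-exact} applied to degreewise split sequences, then Lemma \ref{lem:chi-H} to kill acyclic cones); the only difference is the decomposition: the paper uses the single sequence $(P',\alpha',Q')\rightarrowtail(\cone f,\gamma,\cone g)\twoheadrightarrow(P[1],\alpha[1],Q[1])$ together with Lemma \ref{lem:chi-first}(ii) to undo the shift, while you run two cylinder sequences to avoid shifts entirely --- essentially the same proof. For (ii) your route is genuinely different: the paper reduces (ii) to (i) by extending $\alpha$ and $\beta$ to a homotopy equivalence out of the mapping cylinder $C(F(P))$ and lifting $F(P)\to C(F(P))$ to a quasi-isomorphism in $\mcal{A}$, whereas you work directly with the homotopy $h$, observing that $\phi=\bigl(\begin{smallmatrix}1&0\\ h&1\end{smallmatrix}\bigr)$ is a chain isomorphism $\cone(\alpha)\xrightarrow{\simeq}\cone(\beta)$ whose even and odd parts are elementary, so the two $\Phi$'s agree up to left and right multiplication by elements of $E$. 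Your version is self-contained at the level of \S 1 (it never invokes (i) or Lemmas \ref{lem:chi-exact}/\ref{lem:chi-H} for part (ii)) at the cost of the small extra verification you correctly flag, namely the right-multiplication variant of Corollary \ref{cor:elementary}, which indeed follows from relation (b) and $[(R,\gamma,R)]=[(R,1,R)]=0$ for $\gamma\in E(F(R))$; the paper's version is shorter but makes (ii) logically dependent on (i). Both are valid.
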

\begin{proof}
Let $C(F(P))$ be the mapping cylinder of the identity map of $F(P)$.
Then $\alpha$ and $\beta$ extend to a homotopy equivalence $C(F(P))\to F(Q)$.
Since the canonical map $F(P)\to C(F(P))$ lifts to a quasi-isomorphism in $\mcal{A}$, (ii) follows from (i).

For (i), we have an exact sequence
\[
\xymatrix@1{
	(P',\alpha',Q') \ar@{ >->}[r] & (\cone f,\gamma,\cone g) \ar@{->>}[r] & (P[1],\alpha[1],Q[1]).
}
\]
According to Lemma \ref{lem:chi-exact}, we have
\[
\begin{split}
	\chi(\cone f,\gamma,\cone g) &= \chi(P',\alpha',Q')+\chi(P[1],\alpha[1],Q[1]) \\
								 &= \chi(P',\alpha',Q')-\chi(P,\alpha,Q).
\end{split}
\]
Since $H_*\cone f=H_*\cone g= 0$, it follows from Lemma \ref{lem:chi-H} that
\[
	\chi(\cone f,\gamma,\cone g) = \sum(-1)^i[(H_i\cone f,H_i\gamma,H_i\cone g)] = 0.
\]
\end{proof}

\begin{lemma}\label{lem:chi-composite}
Let $P,Q,R$ be bounded complexes in $\mcal{A}$ with homotopy equivalences $\alpha\colon F(P)\xrightarrow{\sim}F(Q)$ and $\beta\colon F(Q)\xrightarrow{\sim}F(R)$.
Then
\[
	\chi(P,\beta\alpha,R) = \chi(P,\alpha,Q) + \chi(Q,\beta,R).
\]
\end{lemma}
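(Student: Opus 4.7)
The plan is to use additivity of $\chi$ under direct sums to reduce both sides of the identity to the Euler class of a single triple with source $P \oplus Q$, and then compare via a Whitehead-type argument. First, Lemma \ref{lem:chi-exact} applied to the trivially split exact sequence
\[
	(P_1,\alpha_1,Q_1) \rightarrowtail (P_1 \oplus P_2, \alpha_1 \oplus \alpha_2, Q_1 \oplus Q_2) \twoheadrightarrow (P_2,\alpha_2,Q_2)
\]
shows that $\chi$ is additive under direct sums of triples. Moreover $\chi(Q, 1_{F(Q)}, Q) = 0$: the canonical strict split $s(q_1, q_2) := (0, q_1)$ of $\cone(1_{F(Q)})$ yields a $\Phi$ which, when assembled over all degrees, is upper triangular with identity on the diagonal, hence an elementary transformation of $\bigoplus_n F(Q)_n$, and so vanishes in $K_0(F)$ by Corollary \ref{cor:elementary}. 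Combining these facts, $\chi(P,\alpha,Q) + \chi(Q,\beta,R) = \chi(P \oplus Q, \alpha \oplus \beta, Q \oplus R)$ and $\chi(P,\beta\alpha,R) = \chi(P \oplus Q, (\beta\alpha) \oplus 1_{F(Q)}, R \oplus Q)$.

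Using Lemma \ref{lem:chi-first}(i) with the swap isomorphism $Q \oplus R \xrightarrow{\simeq} R \oplus Q$ in $\mcal{A}$, the lemma reduces to showing $\chi(P \oplus Q, \mu, R \oplus Q) = \chi(P \oplus Q, \nu, R \oplus Q)$, where $\mu(p, q) := (\beta q, \alpha p)$ and $\nu(p, q) := (\beta\alpha p, q)$. Up to homotopy one has $\mu = \nu \circ \sigma$ for the swap-with-$\alpha$ automorphism $\sigma = \bigl(\begin{smallmatrix} 0 & \alpha^{-1} \\ \alpha & 0 \end{smallmatrix}\bigr)$ of $F(P \oplus Q)$, which is stably a product of elementary transformations by the Whitehead identity $\bigl(\begin{smallmatrix} 0 & u \\ -u^{-1} & 0 \end{smallmatrix}\bigr) = \bigl(\begin{smallmatrix} 1 & u \\ 0 & 1 \end{smallmatrix}\bigr) \bigl(\begin{smallmatrix} 1 & 0 \\ -u^{-1} & 1 \end{smallmatrix}\bigr) \bigl(\begin{smallmatrix} 1 & u \\ 0 & 1 \end{smallmatrix}\bigr)$.

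The remaining step is to transport this factorization through the cone construction: a strict split of $\cone\nu$ pulls back along $\sigma$ (possibly after stabilizing by identity triples) to a strict split of $\cone\mu$, and the resulting Euler maps $\Phi_\mu$ and $\Phi_\nu$ differ by composition with an elementary transformation, mirroring the argument in Lemma \ref{lem:strictsplit}; Corollary \ref{cor:elementary} then gives the equality of classes in $K_0(F)$. The main obstacle is precisely this last step: lifting the abstract Whitehead factorization to a concrete elementary transformation acting on the even/odd degree assemblies, while managing the fact that $\alpha^{-1}$ is only a homotopy inverse of $\alpha$. This likely requires invoking Corollary \ref{cor:lemmas}(ii) to absorb auxiliary homotopies, and possibly a pass through mapping cylinders to rectify the $\sigma$ to a strict map before performing the comparison.
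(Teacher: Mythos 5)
Your argument is essentially the paper's: the paper likewise reduces to the single triple $(P\oplus Q,\left(\begin{smallmatrix}0 & -1 \\ \beta\alpha & 0\end{smallmatrix}\right),Q\oplus R)$ via Lemma \ref{lem:chi-exact} (using an exact sequence with third term $(Q,1,Q)$) and then diagonalizes the off-diagonal matrix to $\alpha\oplus\beta$ by a Whitehead-type factorization into elementary transformations, absorbing the homotopies with Corollary \ref{cor:lemmas}(ii). Two minor points: your $\sigma=\left(\begin{smallmatrix}0 & \alpha^{-1} \\ \alpha & 0\end{smallmatrix}\right)$ needs a sign, since only $\left(\begin{smallmatrix}0 & u \\ -u^{-1} & 0\end{smallmatrix}\right)$ is a product of elementaries (the leftover factor $1\oplus(-1)$ lifts to an automorphism in $\mcal{A}$ and is harmless by Lemma \ref{lem:chi-first}(i)), and the ``main obstacle'' you flag at the end is already dispatched by Lemma \ref{lem:chi-first}(i) together with Corollary \ref{cor:elementary} --- composing with a chain automorphism that is degreewise elementary does not change $\chi$, so no transport of strict splits through the cone is needed.
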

\begin{proof}
From the exact sequence
\[
\xymatrix@1{
	(P,\beta\alpha,R) \ar@{ >->}[r] & (P\oplus Q,\left({\begin{smallmatrix}0 & -1 \\ \beta\alpha & 0\end{smallmatrix}}\right),Q\oplus R) \ar@{->>}[r] & (Q,1,Q),
}
\]
it follows that
\[
	\chi(P,\beta\alpha,R) = \chi(P\oplus Q,\left(\begin{smallmatrix}0 & -1 \\ \beta\alpha & 0\end{smallmatrix}\right),Q\oplus R).
\]
Let $\beta^{-1}$ be a homotopy inverse of $\beta$.
Then we have homotopy equivalences
\[
	\begin{pmatrix} \alpha & 0 \\ 0 & \beta \end{pmatrix}
	\sim \begin{pmatrix} 0 & \beta^{-1} \\ \beta & 0 \end{pmatrix}\begin{pmatrix} 0 & -1 \\ \beta\alpha & 0 \end{pmatrix}
	\sim \gamma\begin{pmatrix} 0 & -1 \\ \beta\alpha & 0 \end{pmatrix},
\]
where $\gamma$ is a product of elementary transformations.
Hence, by Corollary \ref{cor:lemmas} (ii) and Lemma \ref{lem:chi-first} (i), we have
\[
\begin{split}
	\chi(P\oplus Q,\left({\begin{smallmatrix}0 & -1 \\ \beta\alpha & 0\end{smallmatrix}}\right),Q\oplus R)
	&=\chi(P\oplus Q,\alpha\oplus\beta,Q\oplus R) \\
	&=\chi(P,\alpha,Q) + \chi(Q,\beta,R).
\end{split}
\]
\end{proof}

\begin{proposition}
The Euler characteristic defined in Definition \ref{def:chi} gives a group homomorphism
\[
	\chi\colon K_0(\msf{D}(F))\to K_0(F).
\]
\end{proposition}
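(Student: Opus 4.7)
The plan is to verify that $(P,\alpha,Q)\mapsto\chi(P,\alpha,Q)$ respects the two defining relations of $K_0(\msf{D}(F))$, hence descends to a homomorphism. First I would check that $\chi$ is well-defined as a function on $\msf{Ob}(\Rel(\msf{D}(F)))$. An object there consists of bounded complexes $P,Q$ in $\mcal{A}$ (since the Verdier quotient has the same objects as $\msf{K}^b(\mcal{A})$) together with an isomorphism $\alpha$ in $\msf{K}^b(\mcal{B})$, i.e., a homotopy class of chain maps $F(P)\to F(Q)$; Definition \ref{def:chi} requires a strict representative, but Corollary \ref{cor:lemmas} (ii) guarantees $\chi$ is independent of the choice. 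The composition relation (b), namely $\chi(P,\beta\alpha,R)=\chi(P,\alpha,Q)+\chi(Q,\beta,R)$, is then exactly Lemma \ref{lem:chi-composite}.

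The substantive step is relation (a). Given an exact triangle $(P_1,\alpha_1,Q_1)\to(P_2,\alpha_2,Q_2)\to(P_3,\alpha_3,Q_3)\to(P_1,\alpha_1,Q_1)[1]$ in $\Rel(\msf{D}(F))$, I want to reduce to the situation of a mapping-cone triangle in $\msf{Ch}^b(\mcal{A})$, where Lemma \ref{lem:chi-exact} applies to the degreewise split sequence $P_2\rightarrowtail\cone(f_1)\twoheadrightarrow P_1[1]$. Represent $f_1$ in the quotient by a roof $P_1\xleftarrow{s}\tilde P_1\xrightarrow{\tilde f_1}P_2$ with $\cone(s)$ acyclic, and similarly $g_1$ by $\tilde g_1\colon\tilde Q_1\to Q_2$. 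Using Corollary \ref{cor:lemmas} (i) I may replace $(P_1,\alpha_1,Q_1)$ by $(\tilde P_1,\tilde\alpha_1,\tilde Q_1)$ without changing $\chi$, so I may assume $f_1,g_1$ are strict chain maps. Since mapping cones in $\msf{K}^b(\mcal{A})$ represent the third vertex of an exact triangle uniquely up to isomorphism, a second application of Corollary \ref{cor:lemmas} (i) lets me replace $(P_3,\alpha_3,Q_3)$ by $(\cone(f_1),\alpha_3,\cone(g_1))$. Having reduced to this case, the two degreewise split short exact sequences of complexes assemble, via the induced $\alpha_3$, into an exact sequence in $\Rel(F)$, and Lemma \ref{lem:chi-exact} combined with Lemma \ref{lem:chi-first} (ii) yields
\[
\chi(P_3,\alpha_3,Q_3)=\chi(P_2,\alpha_2,Q_2)+\chi(P_1[1],\alpha_1[1],Q_1[1])=\chi(P_2,\alpha_2,Q_2)-\chi(P_1,\alpha_1,Q_1),
\]
which is the desired relation.

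The main obstacle is the compatibility needed before taking cones: the square $\alpha_2 F(f_1)=F(g_1)\alpha_1$ commutes only up to homotopy in $\msf{K}^b(\mcal{B})$, whereas forming a strict chain map on the cone requires strict commutativity. I would handle this by modifying the chosen representative of $\alpha_2$ by the relevant chain homotopy so that the square commutes on the nose, relying on Corollary \ref{cor:lemmas} (ii) to ensure $\chi(P_2,\alpha_2,Q_2)$ is unchanged; once strict commutativity is achieved, $\alpha_3$ is produced by the standard cone construction and automatically fits into the required exact sequence. With this bookkeeping in place, the remainder of the argument is a direct application of the lemmas already developed.
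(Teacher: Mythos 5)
Your outline follows the paper's proof almost step for step: well-definedness via Corollary \ref{cor:lemmas} (ii), relation (b) via Lemma \ref{lem:chi-composite}, and relation (a) by strictifying $f_1,g_1$ with Corollary \ref{cor:lemmas} (i), replacing the third vertex by the mapping cones, and applying Lemma \ref{lem:chi-exact} together with Lemma \ref{lem:chi-first} (ii). That is exactly the paper's route.

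The one place where your mechanism would fail is the treatment of the homotopy-commutative square. You propose to modify the representative of $\alpha_2$ by the relevant chain homotopy so that $\alpha_2 F(f_1)=F(g_1)\alpha_1$ holds on the nose. In general this is impossible: the homotopy $h$ witnessing $\alpha_2F(f_1)-F(g_1)\alpha_1=dh+hd$ is a degree-one map defined on $F(P_1)$, and absorbing it into $\alpha_2$ would require writing $h=h'\circ F(f_1)$ for some $h'$ defined on $F(P_2)$, which need not exist. The correct (and standard) fix, which is what the paper's construction of $\alpha_3'$ amounts to, is to leave $\alpha_2$ alone and build $h$ into the induced map on cones, e.g.\ $\left(\begin{smallmatrix}\alpha_1[1] & 0\\ h & \alpha_2\end{smallmatrix}\right)\colon F(\cone f_1)\to F(\cone g_1)$; this is a strict chain map that strictly commutes with the inclusions of $P_2,Q_2$ and the projections onto $P_1[1],Q_1[1]$, so Lemma \ref{lem:chi-exact} applies verbatim. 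With that correction the argument goes through and coincides with the paper's.
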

\begin{proof}
Let $X=(P,\bar{\alpha},Q)$ be an object of $\Rel(\msf{K}^b(\mcal{A})/\msf{K}^{b,\emptyset}(\mcal{A})\xrightarrow{\msf{D}(F)}\msf{K}^b(\mcal{B}))$;
$P,Q$ are bounded complexes in $\mcal{A}$ and $\bar{\alpha}$ is the homotopy equivalent class of a homotopy equivalence $\alpha\colon F(P)\xrightarrow{\sim}F(Q)$.
Hence, by Corollary \ref{cor:lemmas} (ii), the Euler characteristic of $X$
\[
	\chi(X):=\chi(P,\alpha,Q)
\]
is well-defined.
It remains to show that $\chi$ kills the relations (a) (b) of $K_0(\msf{D}(F))$ in Definition \ref{def:K_0(F)tri}.

For the relation (b), let $(P,\bar{\alpha},Q),(Q,\bar{\beta},R)\in \Rel(\msf{D}(F))$.
Then $\bar{\beta}\bar{\alpha}$ is a homotopy equivalent class of $\beta\alpha$.
Hence, by Lemma \ref{lem:chi-composite}, we have
\[
	\chi(P,\bar{\beta}\bar{\alpha},R) = \chi(P,\bar{\alpha},Q) + \chi(Q,\bar{\beta},R).
\]

For the relation (a), let
\[
\xymatrix@1{
	(P_1,\alpha_1,Q_1) \ar[r]^-{(f,g)} & (P_2,\alpha_2,Q_2) \ar[r] & (P_3,\alpha_3,Q_3) \ar[r] & (P_1,\alpha_1,Q_1)[1]
}
\]
be an exact triangle in $\Rel(\msf{D}(F))$.
According to Corollary \ref{cor:lemmas} (i), we may assume that $f,g$ are maps of complexes.

Now, there are isomorphism $\beta\colon P_3\xrightarrow{\simeq}\cone f$ and $\gamma\colon Q_3\xrightarrow{\simeq}\cone g$
in $\msf{K}^b(\mcal{A})/\msf{K}^{b,\emptyset}(\mcal{A})$ which make the diagrams
\[
\xymatrix{
	P_2 \ar@{=}[d] \ar[r] & P_3 \ar[d]^\beta \ar[r] & P_1[1] \ar@{=}[d] \\
	P_2 \ar[r] 			  & \cone f \ar[r] 			& P_1[1],
}\quad
\xymatrix{
	Q_2 \ar@{=}[d] \ar[r] & Q_3 \ar[d]^\beta \ar[r] & Q_1[1] \ar@{=}[d] \\
	Q_2 \ar[r] 			  & \cone g \ar[r] 			& Q_1[1],
}
\]
commutative.
We have a homotopy equivalence $\alpha_3'\colon F(\cone f)\xrightarrow{\sim}F(\cone g)$ fitting into the commutative diagram
\[
\xymatrix@!0{
	{} & F(Q_1) \ar[rrrr] & {} & {} & {} & F(Q_2) \ar[rrrr] & {} & {} & {} & F(Q_3) \ar[rrrr] \ar[ldd]_{F(\gamma)} & {} & {} & {} & F(Q_1[1]) \\
	{} & {} & {} & {} & {} & {} & {} & {} & {} & {} & {} & {} & {} & {} \\
	F(Q_1) \ar[rrrr] \ar@{=}[ruu] & {} & {} & {} & F(Q_2) \ar[rrrr] \ar@{=}[ruu] & {} & {} & {} & F(\cone g) \ar[rrrr] & {} & {} & {} & F(Q_1[1]) \ar@{=}[ruu] & {} \\
	{} & F(P_1) \ar'[rrr][rrrr] \ar'[u][uuu]_{\alpha_1} & {} & {} & {} & F(P_2) \ar'[rrr][rrrr] \ar'[u][uuu]_{\alpha_2} 
		& {} & {} & {} & F(P_3) \ar'[rrr][rrrr] \ar'[u][uuu]_{\alpha_3} \ar[ldd]^{F(\beta)} & {} & {} & {} & F(P_1[1]) \ar[uuu]_{\alpha_1[1]} \\
	{} & {} & {} & {} & {} & {} & {} & {} & {} & {} & {} & {} & {} & {} \\
	F(P_1) \ar[rrrr] \ar@{=}[ruu] \ar[uuu]^{\alpha_1} & {} & {} & {} & F(P_2) \ar[rrrr] \ar@{=}[ruu] \ar[uuu]^{\alpha_2}
		& {} & {} & {} & F(\cone f) \ar[rrrr] \ar@{.>}[uuu]^{\alpha_3'} & {} & {} & {} & F(P_1[1]) \ar@{=}[ruu] \ar[uuu]^{\alpha_1[1]} & {}.
}
\]
By Corollary \ref{cor:lemmas} (i), we have
\[
	\chi(P_3,\alpha_3,Q_3) = \chi(\cone f,\alpha_3',\cone g).
\]
By Lemma \ref{lem:chi-exact}, we conclude that
\[
\begin{split}
	\chi(\cone f,\alpha_3',\cone g) &= \chi(P_2,\alpha_2,Q_2) + \chi(P_1[1],\alpha_1[1],Q_1[1]) \\
									&= \chi(P_2,\alpha_2,Q_2) - \chi(P_1,\alpha_1,Q_1).
\end{split}
\]
\end{proof}

\subsection{Proof of Theorem \ref{thm:comparison}}\label{proof:comparison}

We prove that the Euler characteristic $\chi\colon K_0(\msf{D}(F))\to K_0(F)$ is the inverse of the canonical map $\iota\colon K_0(F)\to K_0(\msf{D}(F))$.
It is clear that, for $X\in\Rel(F)$, $\chi\iota[X]=[X]$.
Hence, it remains to prove the following.
\begin{lemma}
For $(P,\alpha,Q)\in\Rel(\msf{D}(F))$,
\[
	[(P,\alpha,Q)] = \iota\chi(P,\alpha,Q)
\]
in $K_0(\msf{D}(F))$.
\end{lemma}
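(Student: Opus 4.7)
The goal is $[(P,\alpha,Q)] = \iota\chi(P,\alpha,Q)$ in $K_0(\msf{D}(F))$ for every $(P,\alpha,Q) \in \Rel(\msf{D}(F))$. Both sides are additive over exact triangles in $\Rel(\msf{D}(F))$ --- the left by definition of $K_0$, the right because $\chi$ has been shown in the preceding proposition to factor through $K_0(\msf{D}(F))$ --- and both satisfy the shift rule $[X[n]]=(-1)^n[X]$ (by Lemma \ref{lem:K_0(F)tri}(i) and Lemma \ref{lem:chi-first}(ii), respectively). My strategy is therefore to produce, for each $(P,\alpha,Q)$, an exact triangle in $\Rel(\msf{D}(F))$ of the form
\[ (R,\ F(r),\ R)\ \longrightarrow\ (A[0],\ \Phi,\ B[0])\ \longrightarrow\ (P,\ \alpha,\ Q) \]
where $R \in \mcal{A}$ is an auxiliary complex and $r$ is an automorphism in $\mcal{A}$, so that the first vertex has zero class by Lemma \ref{lem:K_0(F)tri}(ii) and hence $\iota\chi(P,\alpha,Q) = [(A[0],\Phi,B[0])] = [(P,\alpha,Q)]$ directly.

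I would first verify the base case and a guiding special case. If $P = P_0[0]$ and $Q = Q_0[0]$ are both concentrated in degree $0$, then $\alpha\colon F(P_0)\to F(Q_0)$ is already an isomorphism in $\mcal{B}$, $\cone\alpha$ has exactly two nonzero terms with canonical strict split $\alpha^{-1}$, and $A = P_0$, $B = Q_0$, $\Phi = \alpha$, giving $\iota\chi(P,\alpha,Q) = [(P,\alpha,Q)]$ tautologically. The prototypical non-trivial case is $P = P_0[0]$, $Q = (Q_1 \xrightarrow{d_Q} Q_0)$: the existence of a strict split of $\cone\alpha$ becomes the condition that $(\alpha_0, d_Q)\colon F(P_0 \oplus Q_1) \xrightarrow{\simeq} F(Q_0)$ be an isomorphism, so $A = P_0 \oplus Q_1$, $B = Q_0$, $\Phi = (\alpha_0, d_Q)$, and one checks directly that
\[ (Q_1[0],\ 1_{Q_1},\ Q_1[0])\ \to\ ((P_0 \oplus Q_1)[0],\ \Phi,\ Q_0[0])\ \to\ (P_0[0],\ \alpha,\ Q) \]
is an exact triangle in $\Rel(\msf{D}(F))$: the underlying source and target triangles are the split sequence $Q_1 \hookrightarrow P_0 \oplus Q_1 \twoheadrightarrow P_0$ and the cone triangle $Q_1 \xrightarrow{d_Q} Q_0 \to Q$, and compatibility with $\Phi$ and $\alpha$ holds strictly on the left morphism and up to an explicit chain homotopy in $\mcal{B}$ (namely $h_0(p_0, q_1) = -q_1$) on the right. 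Since the first vertex has zero class, this yields $\iota\chi(P,\alpha,Q) = [(P,\alpha,Q)]$.

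For the general case I would fix a chain-level representative of $\alpha$ and a strict split $s$ of $\cone\alpha$, and build $R$ from the higher-degree terms of $P$ and $Q$, together with chain maps $i\colon R \to A[0]$ and $j\colon R \to B[0]$ assembled from the differentials $d_P$, $d_Q$ and the components of $s$, commutativity being enforced by the defining identity $sd + ds = 1$. The main obstacle I foresee is that the components of $s$ live a priori only in $\mcal{B}$, so the naive generalizations of the model-case formulas for $i, j$ need not descend to chain maps in $\mcal{A}$. The remedies I expect to employ are either to enlarge $R$ together with a suitable automorphism $r$ so as to absorb the offending $\mcal{B}$-only summands, or else to induct on the total amplitude of $(P, Q)$: peeling off one degree at a time along brutal truncations reduces the inductive step to the model case above, modulo the same $\mcal{B}$/$\mcal{A}$-compatibility issue, which must be resolved degree by degree.
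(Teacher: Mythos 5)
Your plan has the same skeleton as the paper's proof --- normalize so that $P_i=Q_i=0$ for $i<0$, induct on the amplitude, and compare the degreewise object $(\bigoplus(P_{2i}\oplus Q_{2i+1}),\Phi,\bigoplus(Q_{2i}\oplus P_{2i+1}))$ with $(P,\alpha,Q)$ by a cone argument --- and your base case and two-term model case are correct. But the general step is exactly where you stop: you name the obstacle (the strict split $s$ of $\cone\alpha$ lives only in $\mcal{B}$, so the comparison maps need not be chain maps in $\mcal{A}$) and list two possible remedies without executing either. That obstacle is the entire content of the lemma, so as it stands this is a genuine gap, not a proof.

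Concretely, two things are missing. First, the triangle you ask for, with third vertex of the form $(R,F(r),R)$ for $r$ an automorphism in $\mcal{A}$, is too strong to arrange in one step: in the paper one first stabilizes $P$ to $P'=P\oplus[Q_1\xrightarrow{1}Q_1]$ precisely so that the comparison map $\theta\colon(\Omega_1,\Phi,\Omega_2)\to(P',\alpha',Q')$ can be written with components in $\mcal{A}$ (projections and signs only, no $s$), and the cone of $\theta$ is then a general object $(R,\beta,S)$ of smaller amplitude whose gluing map $\beta$ \emph{does} involve $s$ and $\Phi$. Second, killing $[\cone\theta]$ is not done by exhibiting a degenerate triangle: one applies the induction hypothesis to replace $[(R,\beta,S)]$ by $\iota\chi(R,\beta,S)$, and then proves $\chi(R,\beta,S)=0$ back in $K_0(F)$ by an explicit matrix computation (using the identity $sd+ds=1$) showing that the relevant automorphism $\Psi'$ of $M_0\oplus M_2\oplus M_1$ is, modulo elementary transformations, lifted from $\Aut$ in $\mcal{A}$; this is where Lemma \ref{lem:elementary}, Corollary \ref{cor:elementary} and the $\looparrowright$ calculus enter. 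Neither the stabilization, nor the interplay between $K_0(\msf{D}(F))$ and $K_0(F)$ via the induction hypothesis, nor the elementary-transformation computation appears in your plan, so you should not expect the ``enlarge $R$'' remedy to go through in the naive form you describe.
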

\begin{proof}
We may assume that $P_i=Q_i=0$ for $i<0$.
We prove the lemma by induction on $N:=\min\{n\,|\, P_i=Q_i=0\quad\forall i>n\ge 0\}$.
The case $N=0$ is clear.

We use the following notation:
Set
\[
	\Omega_1:=\bigoplus_i(P_{2i}\oplus Q_{2i+1}) \quad \text{and} \quad \Omega_2:=\bigoplus_i(Q_{2i}\oplus P_{2i+1}),
\]
so that $\chi(P,\alpha,Q)=[(\Omega_1,\Phi,\Omega_2)]$.
According to Freyd-Mitchell theorem, $\mcal{A}$ has an embedding into an abelian category of modules, which allows us talking about elements of objects in $\mcal{A}$.
In principle, we shall denote elements of $P_i$ (resp.\ $Q_i$) by $x_i$ (resp.\ $y_i$).

First of all, we construct $(P',\alpha',Q')\in\Rel(\msf{D}(F))$ with morphisms
\[
	(\Omega_1,\Phi,\Omega_2) \xrightarrow{\theta} (P',\alpha',Q') \xleftarrow{\simeq} (P,\alpha,Q).
\]
Here, $Q':=Q$ and
\[
	P':=P\oplus[\xymatrix@1{\cdots \ar[r] & 0 \ar[r] & Q_1 \ar[r]^1 & Q_1}].
\]
The quasi-isomorphism $\alpha'\colon F(P')\to F(Q)$ is given by
\[
\xymatrix{
	\cdots \ar[r] & F(P_2) \ar[d]^{\alpha_2} \ar[r] & F(P_1)\oplus F(Q_1) \ar[d]^{\alpha_1\oplus 1} \ar[r] & F(P_0)\oplus F(Q_1) \ar[d]^{\alpha_0\oplus d} \\
	\cdots \ar[r] & F(Q_2) \ar[r] 					& F(Q_1) \ar[r] 									   & F(Q_0).
}
\]
The canonical inclusion $P\to P'$ is a quasi-isomorphism, and it yields an isomorphism $(P,\alpha,Q)\xrightarrow{\simeq}(P',\alpha',Q)$ in $\Rel(\msf{D}(F))$.
The map $\theta$ is given by
\begin{align*}
	\Omega_1=P_0\oplus Q_1\oplus P_2\oplus\dotsb &\to P_0'=P_0\oplus Q_1 &(x_0,y_1,x_2,\dotsc)&\mapsto (-x_0,y_1) \\
	\Omega_2=Q_0\oplus P_1\oplus Q_2\oplus\dotsb &\to Q_0 &(y_0,x_1,y_2,\dotsc)&\mapsto y_0.
\end{align*}

We show that $[\cone\theta]=0$ in $K_0(\msf{D}(F))$, which proves the lemma.
First few degrees of $\cone\theta$ look like
\[
\xymatrix@R-1pc{
	\vdots \ar[d] & \vdots \ar[d] \\
	P_2 \ar[d] & Q_2 \ar[d] \\
	(P_0 \oplus Q_1 \oplus P_2 \oplus \dotsb)\oplus P_1\oplus Q_1 \ar[d] & (Q_0 \oplus P_1 \oplus Q_2 \oplus \dotsb)\oplus Q_1 \ar[d] \\
	P_0\oplus Q_1 & Q_0.
}
\]
It follows that the class of $\cone\theta$ in $K_0(\msf{D}(F))$ is equal to $-[(R,\beta,S)]$ where
\begin{gather*}
	R =[\cdots \to P_3 \xrightarrow{d_P} P_2 \xrightarrow{0\oplus d_P} (Q_1\oplus P_2\oplus Q_3\oplus \dotsb)\oplus P_1] \\
	S =[\cdots \to Q_3 \xrightarrow{d_Q} Q_2 \xrightarrow{0\oplus d_Q} (P_1\oplus Q_2\oplus P_3\oplus \dotsb)\oplus Q_1],
\end{gather*}
$\beta_i=\alpha_{i+1}$ for $i\ge 1$ and $\beta_1$ is given by
\[
	\beta_1((y_1,x_2,y_3,\dotsc,),x_1) = (\mrm{pr}_2\Phi(-dx_1,y_1,x_2,y_3,\dotsc),\alpha x_1 + y_1).
\]
The induction hypothesis implies that $[(R,\beta,S)] = \iota\chi(R,\beta,S)$.
We show that $\chi(R,\beta,S)=0$.

We write
\[
	\Omega_1':=\bigoplus_{i\ge 1}(Q_{2i-1}\oplus P_{2i}) \quad \text{and} \quad \Omega_2':=\bigoplus_{i\ge 1}(P_{2i-1}\oplus Q_{2i}),
\]
and denote the projections $\Omega_l\to\Omega_l'$ by $\mrm{pr}_l$.
Then we have
\[
	\chi(R,\beta,S) = [(\Omega_1'\oplus\Omega_2',\Phi',\Omega_2'\oplus\Omega_1')],
\]
where $\Phi'$ is given by
\[
	((y_1,x_2,y_3,\dotsc),(x_1,y_2,x_3,\dotsc))
	\mapsto (-\mrm{pr}_2\Phi(-dx_1,y_1,x_2,y_3,\dotsc),-y_1+\mrm{pr}_1\Phi^{-1}(0,x_1,y_2,x_3,\dotsc)).
\]

Observe that we have $\chi(R,\beta,S) \looparrowright (\Omega_1\oplus\Omega_2,\Psi,\Omega_2\oplus\Omega_1)$ (see Definition \ref{def:looparrow} for ``$\looparrowright$'') with
\begin{multline*}
	\Psi\colon ((x_0,y_1,x_2,\dotsc),(y_0,x_1,y_2,\dotsc))\\
	\mapsto ((y_0,-\mrm{pr}_2\Phi(-dx_1+x_0,y_1,x_2,y_3,\dotsc)),(-x_0,-y_1+\mrm{pr}_1\Phi^{-1}(y_0,x_1,y_2,x_3,\dotsc))).
\end{multline*}
Since the class of $(\Omega_2\oplus\Omega_1,-\Phi^{-1}\oplus\Phi,\Omega_2\oplus\Omega_1)$ is zero, we have
\[
	\chi(R,\beta,S) =[(\Omega_2\oplus\Omega_1,\Psi(-\Phi^{-1}\oplus\Phi),\Omega_2\oplus\Omega_1)].
\]
Now, $\Psi(-\Phi^{-1}\oplus\Phi)$ is given by
\begin{multline*}
	((y_0,x_1,y_2,x_3\dotsc),(x_0,y_1,x_2,y_3\dotsc))\\
	\mapsto ((-\alpha x_0+dy_1,x_1+A,y_2+B,x_3,y_4,\dotsc),(s(y_0)_{P_0}-dx_1,y_1-\alpha x_1+dy_2+s(y_0)_{Q_1},x_2,y_3,\dotsc)),
\end{multline*}
where $A:=s((d_P s(x_0,y_1)_{P_1},0))_{P_1}$, $B:=s((d_P s(x_0,y_1)_{P_1},0))_{Q_2}$ and $s$ is the split $Q_0\to P_0\oplus Q_1$ or $P_0\oplus Q_1\to P_1\oplus Q_2$.
Hence,
\begin{multline*}
	 ((Q_0\oplus P_1\oplus Q_2)\oplus(P_0\oplus Q_1),\Psi',(Q_0\oplus P_1\oplus Q_2)\oplus(P_0\oplus Q_1)) \\
	 \looparrowright (\Omega_2\oplus\Omega_1,\Psi(-\Phi^{-1}\oplus\Phi),\Omega_2\oplus\Omega_1),
\end{multline*}
where $\Psi'$ is the restriction of $\Psi(-\Phi^{-1}\oplus\Phi)$.
We calculate the class of the left hand side in $K_0(F)$ and show that it is zero.

We set $M_0:=Q_0$, $M_2:=P_1\oplus Q_2$, $M_1:=P_0\oplus Q_1$, and denote by $\delta\colon M_l\to M_{l-1}$ the differential of the cone of $\alpha$.
Since $-d_P s(x_0,y_1)_{P_1} + (s\delta(x_0,y_1))_{P_0} = x_0$, we have 
\[
	(A,B)=-s(x_0)+s(s(\delta(x_0,y_1))_{P_0}).
\]
Let $p$ and $q$ be the projections $M_1\to P_0$ and $M_1\to Q_1$ respectively.
Then $\Psi'$ is expressed by the matrix (an endomorphism of $F(M_0)\oplus F(M_2)\oplus F(M_1)$)
\[
	\Psi' = \begin{pmatrix}
				0 & 0 & \delta \\
				0 & 1 & -sp+sps\delta \\
				s & \delta & p_{Q_1}
		   \end{pmatrix}
\]
and we have
\[
	\begin{pmatrix} 1 & 0 & \delta \\ 0 & 1 & 0 \\ 0 & 0 & 1 \end{pmatrix}
	\begin{pmatrix} 1 & 0 & 0 \\ 0 & 1 & 0 \\ -s & 0 & 1 \end{pmatrix}
	\begin{pmatrix} 1 & 0 & 0 \\ -sps & 1 & 0 \\ 0 & -\delta & 1 \end{pmatrix}\Psi'
	= \begin{pmatrix} -1 & 0 & \delta p \\ 0 & 1 & -sp \\ 0 & 0 & 1 \end{pmatrix}.
\]
Therefore, $\Psi'$ lifts to an automorphism of $M_0\oplus M_2\oplus M_1$ modulo elementary transformations, and thus
\[
	[(M_0\oplus M_2\oplus M_1,\Psi',M_0\oplus M_2\oplus M_1)]=0.
\]
\end{proof}

\section{Relative cycle class map}\label{relcycle}

\subsection{Relative $K$-theory of schemes}

For a scheme $X$, we use the following notation:
\begin{enumerate}[(1)]
\item $\msf{Vec}(X)$ is the category of algebraic vector bundles on $X$.
\item $K(X)$ is Quillen's $K$-theory spectrum of the exact category $\msf{Vec}(X)$.
\item $\msf{D}^b(X)$ is the derived category of bounded complexes of $\mcal{O}_X$-modules.
\item $\msf{D}^\perf(X)\subset \msf{D}^b(X)$ is the full subcategory of perfect complexes.
\end{enumerate}
For the most part in this section, we shall assume that a scheme has an ample family of line bundles, cf.\ \cite[2.1.1]{TT90}.
For example, any scheme quasi-projective over an affine scheme has an ample family of line bundles.
Also, any separated regular noetherian scheme has an ample family of line bundles \cite[II 2.2.7.1]{SGA6}.
If a scheme $X$ has an ample family of line bundles, then Quillen's $K$-theory spectrum $K(X)$ behaves well, that is, it is equivalent to the $K$-theory spectrum of the Waldhausen category of perfect complexes of $X$, cf.\ \cite[3.9]{TT90}.

The following theorem is a consequence of the results in \S\ref{relKexact} and \S\ref{relKtri}.
\begin{theorem}\label{thm:relKscheme}
Let $X$ be a scheme with an ample family of line bundles, $Y$ an affine scheme and $f\colon Y\to X$ a morphism of schemes.
Then there exists a natural isomorphism
\[
	\pi_0\hofib(K(X)\xrightarrow{f^*}K(Y)) \simeq K_0(\msf{D}^\perf(X)\xrightarrow{Lf^*}\msf{D}^\perf(Y)).
\]
See Definition \ref{def:K_0(F)tri} for the definition of the right group.
\end{theorem}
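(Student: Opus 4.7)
The plan is to combine Theorem \ref{Heller} and Theorem \ref{thm:comparison}, applied to $F=f^*\colon \msf{Vec}(X)\to\msf{Vec}(Y)$, and then recognize the resulting triangulated relative $K_0$ as the one taken on perfect complexes.

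First, I would verify the hypotheses of Theorem \ref{Heller} for $f^*$. Since $Y$ is affine, $\msf{Vec}(Y)$ is equivalent to the category of finitely generated projective modules over $\Gamma(Y,\mcal{O}_Y)$, and is therefore split exact. Cofinality of $f^*$ is also immediate: any $V\in\msf{Vec}(Y)$ is a direct summand of some trivial bundle $\mcal{O}_Y^n=f^*\mcal{O}_X^n$. This gives the first identification
\[
\pi_0\hofib(K(\msf{Vec}(X))\xrightarrow{f^*}K(\msf{Vec}(Y)))\simeq K_0(f^*\colon \msf{Vec}(X)\to\msf{Vec}(Y)).
\]
Next, I would invoke Theorem \ref{thm:comparison} for the same $f^*$. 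The hypothesis that $\mcal{A}=\msf{Vec}(X)$ is closed under kernels of surjections is local: a surjection of finitely generated free modules splits, so its kernel is a direct summand of a free module and hence again locally free of finite rank. The comparison theorem then yields
\[
K_0(f^*\colon \msf{Vec}(X)\to\msf{Vec}(Y))\simeq K_0\bigl(\msf{D}(f^*)\bigr),
\]
where $\msf{D}(f^*)\colon \msf{K}^b(\msf{Vec}(X))/\msf{K}^{b,\emptyset}(\msf{Vec}(X))\to \msf{K}^b(\msf{Vec}(Y))$.

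It remains to identify $\msf{D}(f^*)$ with $Lf^*\colon \msf{D}^{\perf}(X)\to\msf{D}^{\perf}(Y)$. For $Y$ affine, every bounded acyclic complex of vector bundles on $Y$ is contractible (inductively, the surjection onto the next cycle object splits by projectivity), so $\msf{K}^b(\msf{Vec}(Y))\xrightarrow{\simeq}\msf{D}^{\perf}(Y)$. For $X$ with an ample family of line bundles, a standard result (SGA 6, Exp.\ II; see also Thomason--Trobaugh) produces for every perfect complex on $X$ a globally defined bounded vector bundle resolution, giving an equivalence
\[
\msf{K}^b(\msf{Vec}(X))/\msf{K}^{b,\emptyset}(\msf{Vec}(X))\xrightarrow{\simeq} \msf{D}^{\perf}(X).
\]
Under these two equivalences, $\msf{D}(f^*)$ corresponds to $Lf^*$. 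Since the formation of $K_0$ of a relative triangulated functor depends only on the triples $(P,\alpha,Q)$ up to the triangulated equivalences on source and target, this delivers
\[
K_0\bigl(\msf{D}(f^*)\bigr)\simeq K_0\bigl(\msf{D}^{\perf}(X)\xrightarrow{Lf^*}\msf{D}^{\perf}(Y)\bigr),
\]
and naturality in $f$ is built into each step.

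The main technical point is the equivalence $\msf{K}^b(\msf{Vec}(X))/\msf{K}^{b,\emptyset}(\msf{Vec}(X))\simeq \msf{D}^{\perf}(X)$ in the last step: this is precisely where the ample family of line bundles on $X$ is used, to ensure that every perfect complex admits a bounded resolution by vector bundles and that quasi-isomorphisms between such resolutions are represented by roofs within $\msf{K}^b(\msf{Vec}(X))$. Everything else is a routine verification from the two preceding general theorems.
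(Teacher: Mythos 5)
Your proposal is correct and follows essentially the same route as the paper: apply Theorem \ref{Heller} to $f^*\colon\msf{Vec}(X)\to\msf{Vec}(Y)$, apply Theorem \ref{thm:comparison}, and identify $\msf{D}(f^*)$ with $Lf^*$ via the equivalences $\msf{K}^b(\msf{Vec}(X))/\msf{K}^{b,\emptyset}(\msf{Vec}(X))\simeq\msf{D}^{\perf}(X)$ (from the ample family of line bundles) and $\msf{K}^b(\msf{Vec}(Y))\simeq\msf{D}^{\perf}(Y)$ (from $Y$ affine). Your additional verifications of cofinality, closure under kernels of surjections, and split exactness are exactly the hypotheses the paper implicitly relies on.
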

\begin{proof}
Since $X$ has an ample family of line bundles, every perfect complex is quasi-isomorphic to a bounded complex of algebraic vector bundles, and thus there is an equivalence of triangulated categories
\[
	\msf{K}^b(\msf{Vec}(X))/\msf{K}^{b,\emptyset}(\msf{Vec}(X)) \xrightarrow{\simeq} \msf{D}^\perf(X).
\]
Since $Y$ is affine, $\msf{Vec}(Y)$ is split exact and $\msf{D}^\perf(Y)\simeq \msf{K}^b(\msf{Vec}(Y))$.
Now, the triangulated functor $Lf^*\colon \msf{D}^\perf(X) \to \msf{D}^\perf(Y)$ is identified with the functor $D(f^*)$ induced from the exact functor $f^*\colon \msf{Vec}(X) \to \msf{Vec}(Y)$, cf.\ \S\ref{comparison}.
Therefore, by Theorem \ref{thm:comparison}, we have an isomorphism
\[
	K_0(\msf{Vec}(X)\xrightarrow{f^*}\msf{Vec}(Y)) \simeq K_0(\msf{D}^\perf(X)\xrightarrow{Lf^*}\msf{D}^\perf(Y)).
\]
By Theorem \ref{thm:Heller}, the left hand side is isomorphic to $\pi_0\hofib(K(X)\xrightarrow{f^*}K(Y))$, and we get the theorem.
\end{proof}

Suppose we are given two schemes $X,Y$ and a morphism of schemes $f\colon Y\to X$ between them.
If the morphism $f$ is obvious from the context, we denote by $K(X,Y)$ the homotopy fiber of $f^*\colon K(X)\to K(Y)$ and write $K_0(X,Y):=\pi_0K(X,Y)$.
We adapted this notation because our main interest is the case $Y$ is a closed subscheme of $X$; in this case, the map $f\colon Y\to X$ is the canonical inclusion.

It is clear from the definition that $K_0(X,Y)$ is contravariant functorial, i.e.\ a commutative diagram
\[
\xymatrix{
	Y'\ar[r] \ar[d] & X' \ar[d] \\
	Y \ar[r] 		& X
}
\]
induces a group homomorphism $K_0(X',Y')\to K_0(X,Y)$.

According to the base change theorem \cite[IV 3.1.1]{SGA6}\footnotemark, we have a proper transfer of $K_0(X,Y)$ in the following case.
\footnotetext{``La conjecture de finitude [loc.\ cit., III 2.1]'' assumed there has been proved by Kiehl in \cite{Ki72}.}

\begin{proposition}\label{prop:transfer}
Suppose we are given a cartesian diagram
\[
\xymatrix{
	Y'\ar[r]^{f'} \ar[d]^{g'} & X' \ar[d]^{g} \\
	Y \ar[r]^{f} 			  & X
}
\]
of schemes.
Assume that:
\begin{enumerate}[label={\upshape(\roman*)}]
\item $X,X'$ have ample family of line bundles and $Y,Y'$ are affine.
%This includes that all schemes are quasi-compact and quasi-separated.
\item $f$ and $g$ are Tor-independent over $X$ [loc.\ cit., III 1.5].
\item $g$ is proper and perfect [loc.\ cit., III 4.1].
\end{enumerate}
Then there is a map
\[
	K_0(X',Y') \to K_0(X,Y)
\] 
which sends $(P,\alpha,Q)\in\Rel(Lf^{\prime*})$ to $(Rg_*P,Rg'_*\alpha,Rg_*Q)\in\Rel(Lf^*)$.
\end{proposition}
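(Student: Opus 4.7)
The plan is to lift the assignment to a functor between the categories $\Rel$ before passing to $K_0$, using the derived base change isomorphism to produce the required comparison of $Lf^*$-pullbacks.

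First, I would record two standard facts that make the construction possible. Since $g$ is proper and perfect, the derived pushforward $Rg_\ast$ sends perfect complexes to perfect complexes, so we obtain a triangulated functor $Rg_\ast\colon \msf{D}^{\perf}(X')\to\msf{D}^{\perf}(X)$ (and likewise $Rg'_\ast$ on $Y',Y$). Second, by the base change theorem \cite[IV 3.1]{SGA6}, Tor-independence of $f$ and $g$ over $X$ yields a natural isomorphism of triangulated functors
\[
    \phi\colon Lf^\ast\, Rg_\ast \xrightarrow{\simeq} Rg'_\ast\, Lf'^\ast
\]
on $\msf{D}^{\perf}(X')$.

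Next, I would define the assignment $\Rel(Lf'^\ast)\to\Rel(Lf^\ast)$. On objects, send $(P,\alpha,Q)$ with $\alpha\colon Lf'^\ast P\xrightarrow{\simeq}Lf'^\ast Q$ to $(Rg_\ast P,\tilde{\alpha},Rg_\ast Q)$ where
\[
    \tilde{\alpha}:=\phi_Q^{-1}\circ Rg'_\ast(\alpha)\circ\phi_P\colon Lf^\ast Rg_\ast P \xrightarrow{\simeq} Lf^\ast Rg_\ast Q.
\]
On morphisms, send a pair $(u,v)$ compatible with $\alpha,\alpha'$ to $(Rg_\ast u, Rg'_\ast v)$; the required square commutes by naturality of $\phi$ together with the functoriality of $Rg'_\ast$ applied to the commuting square defining $(u,v)$.

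Finally, I would verify that this functor descends to $K_0$ by checking it respects the two defining relations of Definition \ref{def:K_0(F)}. For relation (b) on composition, $Rg'_\ast$ is a functor, so $Rg'_\ast(\beta\alpha)=Rg'_\ast(\beta)\,Rg'_\ast(\alpha)$, and an easy diagram chase with $\phi$ gives $\widetilde{\beta\alpha}=\tilde{\beta}\tilde{\alpha}$. For relation (a) on exact triangles, use that $Rg_\ast$ and $Rg'_\ast$ are triangulated and that $\phi$ is a natural isomorphism: an exact triangle in $\Rel(Lf'^\ast)$ consists of two exact triangles in $\msf{D}^{\perf}(X')$ and $\msf{D}^{\perf}(Y')$ compatible via $\alpha_i$; applying $Rg_\ast$ and $Rg'_\ast$ yields exact triangles in $\msf{D}^{\perf}(X)$ and $\msf{D}^{\perf}(Y)$, and naturality of $\phi$ ensures the resulting isomorphisms $\tilde{\alpha}_i$ still fit into the required commutative ladder.

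The only potentially delicate point is the compatibility of base change with the shifts and triangles entering relation (a); but since $\phi$ is a morphism of triangulated functors by the enhancement of SGA6 IV 3.1, this reduces to the naturality of $\phi$ in the argument, and no further calculation is needed. The resulting group homomorphism $K_0(X',Y')\to K_0(X,Y)$ is then the desired transfer.
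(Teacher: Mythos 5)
Your proposal is correct and takes essentially the same route as the paper: the paper offers no written proof beyond invoking the base change theorem \cite[IV 3.1]{SGA6} in the sentence preceding the statement, and your construction---conjugating $Rg'_*\alpha$ by the base change isomorphism $\phi$ and checking relations (a) and (b) of Definition \ref{def:K_0(F)} via naturality and the fact that $Rg_*$ is triangulated---is exactly the routine verification that citation is meant to stand for.
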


\subsection{Coniveau filtration}\label{coniveau}

Let $X$ be a scheme of dimension $d$ which has an ample family of line bundles, $Y$ an affine closed subscheme of $X$, and we denote the inclusion $Y\hookrightarrow X$ by $\iota$.
We assume that $X\setminus Y$ is regular.
This is a standing assumption that prevails in \S\ref{coniveau}.

By Theorem \ref{thm:relKscheme}, we identify $K_0(X,Y)$ with the $K_0$ of the triangulated functor $Lf^*\colon \msf{D}^\perf(X)\to \msf{D}^\perf(Y)$.
\begin{definition}\label{def:coniveau}
\leavevmode
\begin{enumerate}[(i)]
\item For $\mfk{A}=(P,\alpha,Q)\in\Rel(L\iota^*)$, let $S_{\mfk{A}}$ be the set of open neighborhoods $U$ of $Y$ in $X$ such that there exits an isomorphism $\tilde{\alpha}\colon P\vert_U\xrightarrow{\simeq}Q\vert_U$ in $\msf{D}^\perf(U)$ which lifts $\alpha$.
\item For $-1\le i\le d$, we define $F_iK_0(X,Y)$ to be the subgroup of $K_0(X,Y)$ generated by elements $\mfk{A}\in\Rel(L\iota^*)$ for which there exists $U\in S_{\mfk{A}}$ with $\dim (X\setminus U)\le i$.
\end{enumerate}
\end{definition}

By the definition, $F_iK_0(X,Y)\subset F_{i+1}K_0(X,Y)$, and $F_{-1}K_0(X,Y)$ is generated by $(P,\alpha,Q)$ for which there exists $\tilde{\alpha}\colon P\xrightarrow{\simeq}Q$ such that $L\iota^*\tilde{\alpha}=\alpha$.
Hence, it follows from Lemma \ref{lem:K_0(F)tri} that $F_{-1}K_0(X,Y)=0$.
In general, $F_dK_0(X,Y)$ may not be equal to $K_0(X,Y)$.
However, we have:
\begin{lemma}\label{lem:coniveau}
If $Y$ has an affine open neighborhood in $X$, then $F_dK_0(X,Y)=K_0(X,Y)$.
\end{lemma}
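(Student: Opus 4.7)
The plan is to reduce the statement from the derived setting to the much easier setting of vector bundles, where the required lifting becomes a Nakayama-type argument. Concretely, I will first show that every class in $K_0(X,Y)$ is represented by a triple $(E,\alpha,F)$ with $E, F \in \msf{Vec}(X)$ and $\alpha$ a genuine isomorphism of vector bundles on $Y$; I will then lift $\alpha$ to an isomorphism of vector bundles on some open neighborhood of $Y$ in $X$.

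For the first step, observe that $Y$ is affine by hypothesis, so the exact functor $\iota^*\colon \msf{Vec}(X) \to \msf{Vec}(Y)$ is cofinal and its target is split exact by Example \ref{mainexample}(2). Combining Heller's Theorem \ref{Heller} with Theorem \ref{thm:relKscheme} yields a natural isomorphism
\[
K_0(\msf{Vec}(X) \xrightarrow{\iota^*} \msf{Vec}(Y)) \xrightarrow{\simeq} K_0(X,Y),
\]
so every class $\mfk{A} \in K_0(X,Y)$ admits a representative of the form $(E,\alpha,F)$ with $\alpha\colon \iota^*E \xrightarrow{\simeq} \iota^*F$ an honest isomorphism of vector bundles, not merely a derived one.

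For the second step, let $V \subseteq X$ be an affine open neighborhood of $Y$. Since $E|_V$ and $F|_V$ are finitely generated projective $\mcal{O}(V)$-modules and $\mcal{O}(V) \twoheadrightarrow \mcal{O}(Y)$ is surjective, the induced map $\Hom_{\mcal{O}_V}(E|_V, F|_V) \to \Hom_{\mcal{O}_Y}(\iota^*E, \iota^*F)$ is surjective, so $\alpha$ lifts to a morphism $\tilde{\alpha}\colon E|_V \to F|_V$. I then take
\[
U := \{x \in V \mid \tilde{\alpha}_x \text{ is an isomorphism}\},
\]
which is open in $V$ (locally it is the nonvanishing locus of a determinant), and which contains $Y$ by Nakayama's lemma together with the fact that $E$ and $F$ have locally constant rank and agree in rank along $Y$. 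Consequently, $U$ is an open neighborhood of $Y$ in $X$ on which $\tilde{\alpha}$ witnesses a lift of $\alpha$ to an isomorphism in $\msf{D}^\perf(U)$, so $U \in S_{\mfk{A}}$; since $\dim(X \setminus U) \le d$ holds automatically, $\mfk{A} \in F_d K_0(X,Y)$.

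I do not expect a serious obstacle in carrying this out; the delicate point is conceptual, namely recognizing that the comparison theorems of \S\ref{relKexact} and \S\ref{relKtri} collapse the derived lifting problem---where extending a quasi-isomorphism to a neighborhood is cohomologically obstructed---into the elementary problem of extending an isomorphism of vector bundles, to which Nakayama's lemma and the openness of the iso locus apply directly.
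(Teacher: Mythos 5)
Your proposal is correct and follows essentially the same route as the paper: reduce via the comparison theorems to a representative $(E,\alpha,F)$ with $E,F$ vector bundles, pass to the affine open neighborhood of $Y$, lift $\alpha$ to a module map using projectivity, and conclude that the isomorphism locus is an open set containing $Y$ by Nakayama. The only cosmetic difference is that the paper certifies openness of that locus via the (closed) supports of the kernel and cokernel of the lifted map rather than via a determinant, but the content is identical.
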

\begin{proof}
Let $\mfk{A}\in\Rel(L\iota^*)$.
According to Theorem \ref{thm:comparison}, the class of $\mfk{A}$ in $K_0(X,Y)$ is equal to the one of some $(P,\alpha,Q)$ with $P,Q\in\msf{Vec}(X)$.
It suffices to show that $S_{(P,\alpha,Q)}\ne\emptyset$, i.e.\ there exists an open neighborhood $U$ of $Y$ in $X$ and an isomorphism $P\vert_U\xrightarrow{\simeq} Q\vert_U$ which lifts $\alpha$.

By our assumption, we may assume that $X$ is affine, say $X=\Spec A$ and $Y=\Spec A/I$.
Since $Q$ is a projective $A$-module, we have an $A$-homomorphism $\gamma\colon P\to Q$ which fits into the commutative diagram
\[
\xymatrix{
	P \ar@{->>}[d] \ar[r]^\gamma & Q \ar@{->>}[d] \\
	P\otimes_AA/I \ar[r]^\alpha  & Q\otimes_AA/I.
}
\]
Let $K$ and $L$ be the kernel and the cokernel of $\gamma$ respectively.
We claim that for every $y\in Y$, $K_y=L_y=0$.
Since $-\otimes_{A_y}A_y/\mfk{m}_y$ ($\mfk{m}_y$ is the maximal ideal) is right exact and $\gamma_y\otimes_{A_y}A_y/\mfk{m}_y$ is an isomorphism, we have $L_y\otimes_{A_y}A_y/\mfk{m}_y=0$.
By Nakayama's lemma, $L_y=0$.
Since $Q_y$ is projective, the exact sequence
\[
\xymatrix@1{
	0 \ar[r] & K_y \ar[r] & P_y \ar[r] & Q_y \ar[r] & 0
}
\]
is split exact.
Hence, $K_y\otimes_{A_y}A_y/\mfk{m}_y=0$ and $K_y=0$.

Since the supports of $K$ and $L$ are closed, it follows from the claim that there exists some open neighborhood of $Y$ on which $\gamma$ is an isomorphism. 
This completes the proof.
\end{proof}

Let $C_k(X|Y)$ be the set of integral closed subschemes of $X$ of dimension $k$ which do not meet $Y$, and $Z_k(X|Y)$ the free abelian group generated by $C_k(X|Y)$.
For $V\in C_k(X|Y)$, the triple $(\mcal{O}_V,0,0)$ defines an element of $F_kK_0(X,Y)$, which we denote by $\cyc(V)$.
\begin{lemma}\label{lem:cycleclass}
Under the standing assumption in \S\ref{coniveau}, the map
\[
	\cyc\colon Z_k(X|Y) \to F_kK_0(X,Y)/F_{k-1}K_0(X,Y)
\]
is surjective for all $k\ge 0$.
\end{lemma}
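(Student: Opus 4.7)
The plan is to reduce the lemma to the classical cycle class surjectivity (\ref{Gro-cyclemap}) on the regular scheme $X\setminus Y$.

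Fix a generator $\mfk{A}=(P,\alpha,Q)$ of $F_kK_0(X,Y)$; by definition there exist an open $U\supset Y$ with $\dim(X\setminus U)\le k$ and an isomorphism $\tilde\alpha\colon P|_U\xrightarrow{\simeq} Q|_U$ in $\msf{D}^\perf(U)$ lifting $\alpha$. Set $Z:=X\setminus U$, a closed subset of dimension $\le k$, disjoint from $Y$ and hence contained in the regular open $X\setminus Y$.

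The crux is to construct a global morphism $f\colon P\to Q$ in $\msf{D}^\perf(X)$ such that $L\iota^*f=\alpha$ and $f|_U=\tilde\alpha$ in $\msf{D}^\perf(U)$. Given such an $f$, the cone $\cone(f)\in\msf{D}^\perf(X)$ is supported on $Z$ (since $f|_U$ is a quasi-isomorphism), in particular $L\iota^*\cone(f)=0$. Then $(f,\mrm{id}_Q)$ defines a morphism $(P,\alpha,Q)\to(Q,1,Q)$ in $\Rel(L\iota^*)$, which completes to an exact triangle
\[
(P,\alpha,Q)\xrightarrow{(f,\mrm{id})}(Q,1,Q)\to(\cone(f),0,0)\to(P,\alpha,Q)[1]
\]
in $\Rel(L\iota^*)$. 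Combined with $[(Q,1,Q)]=0$ (Lemma \ref{lem:K_0(F)tri} (ii)), relation (a) of Definition \ref{def:K_0(F)} yields $[\mfk{A}]=-[(\cone(f),0,0)]$ in $K_0(X,Y)$, and the right-hand side plainly lies in $F_kK_0(X,Y)$.

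To identify $[(\cone(f),0,0)]$ modulo $F_{k-1}$ with a cycle class, one invokes excision: $[\cone(f)]\in K_0^Z(X)$ agrees with its image in $K_0^Z(X\setminus Y)$, and the classical cycle class theorem on the regular scheme $X\setminus Y$ writes this class, modulo the image of $K_0^{Z'}(X\setminus Y)$ for some closed $Z'\subset Z$ of dimension $\le k-1$, as $\sum_i n_i[\mcal{O}_{V_i}]$, where the $V_i$ are the integral components of $Z$ of dimension exactly $k$. The assignment $[C]\mapsto[(C,0,0)]$ sends perfect complexes supported on $Z'$ into $F_{k-1}K_0(X,Y)$, so $[(\cone(f),0,0)]\equiv\sum_i n_i\cyc(V_i)\pmod{F_{k-1}}$ and consequently $[\mfk{A}]\equiv-\sum_i n_i\cyc(V_i)\pmod{F_{k-1}}$, as required.

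The main obstacle is the construction of the lift $f$. After choosing bounded vector-bundle representatives of $P,Q$ (using the ample family on $X$) and a chain-map representative of $\tilde\alpha$ on $U$, extending $\tilde\alpha$ to a chain map on $X$ is obstructed by a first local-cohomology class along $Z$ in $\mcal{H}om^\bullet(P,Q)$. This is handled by stabilising $(P,Q)$: adding a direct summand $R\in\msf{Vec}(X)$ leaves $[\mfk{A}]$ unchanged, because the exact sequence $(P,\alpha,Q)\rightarrowtail(P\oplus R,\alpha\oplus\mrm{id}_R,Q\oplus R)\twoheadrightarrow(R,1,R)$ and the identity $[(R,1,R)]=0$ give $[(P\oplus R,\alpha\oplus\mrm{id}_R,Q\oplus R)]=[\mfk{A}]$, and taking $R$ a sufficiently positive vector bundle kills the obstruction. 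Any residual discrepancy between $f|_U$ and $\tilde\alpha$ modifies $\cone(f)$ only by complexes supported in dimension strictly less than $k$, which are absorbed into $F_{k-1}K_0(X,Y)$.
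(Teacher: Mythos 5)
Your overall strategy is the same as the paper's: localize the class of $\mfk{A}=(P,\alpha,Q)$ onto the closed set $V=X\setminus U$ of dimension $\le k$, identify the resulting class with something in $G_0(V)\simeq K_0^V(X\setminus Y)$ using regularity of $X\setminus Y$, and then invoke the classical surjectivity of the cycle map onto $G_0(V)$ modulo dimension $\le k-1$. The difference is in how the localization is carried out, and this is where your argument has a genuine gap. You try to produce a \emph{global} morphism $f\colon P\to Q$ in $\msf{D}^\perf(X)$ with $f|_U=\tilde\alpha$, so that $\cone(f)$ is supported on $V$. The obstruction to lifting $\tilde\alpha\in\Hom_{\msf{D}^\perf(U)}(P|_U,Q|_U)$ along the restriction $\Hom_{\msf{D}^\perf(X)}(P,Q)\to\Hom_{\msf{D}^\perf(U)}(P|_U,Q|_U)$ is real, but your proposed fix does not address it: replacing $(P,\alpha,Q)$ by $(P\oplus R,\alpha\oplus 1,Q\oplus R)$ splits the Hom-complex as $R\mcal{H}om(P,Q)\oplus R\mcal{H}om(P,R)\oplus R\mcal{H}om(R,Q)\oplus R\mcal{H}om(R,R)$ and the class to be lifted is block-diagonal, so the obstruction for the $(P,Q)$-block is literally unchanged; moreover the obstruction lives in a local cohomology group with supports in $V$ (essentially a cokernel of $H^0(X,-)\to H^0(U,-)$ in the relevant degree), which is not killed by positivity of an auxiliary bundle $R$ --- Serre vanishing controls global cohomology of large twists, not sections-on-$U$ modulo sections-on-$X$. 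The fallback sentence, that any residual discrepancy between $f|_U$ and $\tilde\alpha$ changes $\cone(f)$ only by complexes supported in dimension $<k$, is also unjustified: if $f|_U\ne\tilde\alpha$ then $f|_U$ need not be an isomorphism anywhere, and $\cone(f)$ need not be supported on $V$ at all.

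The correct way to repair this is exactly what the paper does: one never constructs $f$. Instead one uses the localization exact sequence
\[
\xymatrix@1{ G_0(V) \ar[r] & K_0(X,Y) \ar[r] & K_0(U,Y), }
\]
where $G_0(V)\simeq K_0^V(X,Y)$ because $V$ is disjoint from $Y$ and lies in the regular locus. The only input needed from the lifting $\tilde\alpha$ is that the image of $[\mfk{A}]$ in $K_0(U,Y)$ vanishes (immediate from Lemma \ref{lem:K_0(F)tri}\,(ii)), whence $[\mfk{A}]$ comes from $G_0(V)$; the surjectivity of $\bigoplus_{i\le k}Z_i(V)\to G_0(V)$ then finishes the argument, with the $i<k$ summands absorbed into $F_{k-1}K_0(X,Y)$. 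Alternatively, if you insist on a morphism-level argument, Thomason--Trobaugh's description of $\msf{D}^\perf(U)$ as a Verdier quotient of $\msf{D}^\perf(X)$ by complexes supported on $V$ lets you lift $\tilde\alpha$ only after replacing $P$ by some $P'\to P$ whose cone is supported on $V$; that replacement produces precisely the $G_0(V)$-correction terms, and is the bookkeeping your write-up is missing.
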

\begin{proof}
Suppose we are given $\mfk{A}\in\Rel(L\iota^*)$ whose class is in $F_kK_0(X,Y)$ but not in $F_{k-1}$, so that there exists $U\in S_{\mfk{A}}$ such that $\dim (X\setminus U)= k$.
Set $V:=X\setminus U$ equipped with the reduced scheme structure.

Let $K^V(X)$ be the $K$-theory spectrum of $X$ with support in $V$, i.e.\ the homotopy fiber of the canonical map $K(X)\to K(U)$.
Since $V$ does not meet $Y$, $K^V(X)$ is equivalent to the homotopy fiber of $K(X,Y)\to K(U,Y)$.
Also, since $V$ does not meet $Y$ and $X\setminus Y$ is regular, $K^V(X)$ is equivalent to the $G$-theory spectrum $G(V)$ of $V$.
Hence, we have an exact sequence
\[
\xymatrix@1{
	G_0(V) \ar[r] & K_0(X,Y) \ar[r] & K_0(U,Y).
}
\]
Now, the class of $\mfk{A}$ dies in $K_0(U,Y)$, and thus it comes from $G_0(V)$.

Since the usual cycle map
\[
	\bigoplus_{i=0}^k Z_i(V) \to G_0(V), \quad W\mapsto \mcal{O}_W
\]
is surjective, the class of $\mfk{A}$ is in the image of 
\[
	\bigoplus_{i=0}^k Z_i(V) \to \bigoplus_{i=0}^k Z_i(X|Y) \to F_kK_0(X,Y).
\]
This proves the lemma.
\end{proof}

\subsection{Chow group with modulus}\label{Chowmodulus}

Let $X$ be a scheme separated of finite type over a field $k$ and $D$ an effective Cartier divisor on $X$.
We denote the inclusion $D\hookrightarrow X$ by $\iota$.
We recall the definition of the Chow group with modulus by Binda-Saito \cite{BS17}.

Let $k\ge 0$.
Let $R_k(X|D)$ be the set of integral closed subschemes $V$ of $X\times\mbb{P}^1$ of dimension $k+1$ which are dominant over $\mbb{P}^1$ and satisfy the following condition (modulus condition): 
Let $V^N$ be the normalization of $V$ and $\phi$ the canonical map $V^N\to X\times\mbb{P}^1$, then we have an inequality of Cartier divisors
\[
	\phi^*(D\times\mbb{P}^1) \le \phi^*(X\times\{\infty\}).
\]
For each $V\in R_k(X|D)$, the inverse images $V_t$ of $t\in\mbb{P}^1$ are purely of dimension $k$ and do not meet $D$,
and thus define elements of $Z_k(X|D)$ in the standard way.
\begin{definition}[Binda-Saito \cite{BS17}]
The \textit{Chow group with modulus $\CH_k(X|D)$} is defined to be the quotient of $Z_k(X|D)$ by the relations $[V_0]=[V_1]$ for all $V\in R_k(X|D)$.
\end{definition}

\begin{theorem}\label{thm:cycleclass}
Suppose that $X$ is regular and that $D$ is affine.
Then there is a surjective group homomorphism
\[
	\cyc\colon \CH_k(X|D) \twoheadrightarrow F_kK_0(X,D)/F_{k-1}K_0(X,D)
\]
for every $k\ge 0$.
\end{theorem}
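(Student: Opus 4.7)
By Lemma~\ref{lem:cycleclass}, the map $\cyc\colon Z_k(X|D)\twoheadrightarrow F_kK_0(X,D)/F_{k-1}K_0(X,D)$ is already surjective, so the theorem reduces to showing that $\cyc$ factors through the modulus rational equivalence: for every $V\in R_k(X|D)$, I must verify
\[
\cyc(V_0)\equiv \cyc(V_1)\pmod{F_{k-1}K_0(X,D)}.
\]
Fix such a $V$. The modulus condition forces $V\cap(D\times\mbb{A}^1)=\emptyset$ (the only possible intersection of $V$ with $D\times\mbb{P}^1$ lies over $\infty$), and since $V$ is integral and dominant over $\mbb{P}^1$, the restriction $V_{\mbb{A}^1}:=V\cap(X\times\mbb{A}^1)$ is flat over $\mbb{A}^1$.

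The plan is to lift both cycle classes to a common class on $X\times\mbb{A}^1$. Since $X\times\mbb{A}^1$ is regular with an ample family of line bundles and $D\times\mbb{A}^1$ is affine, the group $K_0(X\times\mbb{A}^1,D\times\mbb{A}^1)$ is defined, and the structure sheaf $\mcal{O}_{V_{\mbb{A}^1}}$ is a perfect complex on $X\times\mbb{A}^1$ with vanishing derived restriction to $D\times\mbb{A}^1$, giving rise to
\[
\beta:=[(\mcal{O}_{V_{\mbb{A}^1}},0,0)]\in K_0(X\times\mbb{A}^1,D\times\mbb{A}^1).
\]
For $t\in\{0,1\}$, the closed immersion $i_t\colon X\hookrightarrow X\times\mbb{A}^1$ together with its restriction to $D$ induces a pullback $i_t^*\colon K_0(X\times\mbb{A}^1,D\times\mbb{A}^1)\to K_0(X,D)$ via derived pullback on perfect complexes. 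Flatness of $V_{\mbb{A}^1}/\mbb{A}^1$ gives $Li_t^*\mcal{O}_{V_{\mbb{A}^1}}\simeq \mcal{O}_{V_t}$, so $i_t^*\beta=[(\mcal{O}_{V_t},0,0)]$; a dévissage of $\mcal{O}_{V_t}$ along its $k$-dimensional support components, parallel to the proof of Lemma~\ref{lem:cycleclass}, produces $i_t^*\beta\equiv \cyc(V_t)\pmod{F_{k-1}K_0(X,D)}$.

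The decisive remaining task is to prove $i_0^*\beta\equiv i_1^*\beta$ modulo $F_{k-1}K_0(X,D)$. I would attack this by localization: setting $W:=V_0\cup V_1\subset X\setminus D$, the difference $i_0^*\beta-i_1^*\beta$ has vanishing restriction to the open pair $(X\setminus W,D)$, so by a relative localization exact sequence
\[
G_0(W)\to K_0(X,D)\to K_0(X\setminus W,D)
\]
(the leftmost group identifies with relative $K$-theory with supports via regularity of $X$ and $W\cap D=\emptyset$) it lifts to $[\mcal{O}_{V_0}]-[\mcal{O}_{V_1}]\in G_0(W)$. The standard Grothendieck argument on $V\to\mbb{P}^1$ --- using the exact sequences $0\to\mcal{O}_V(-V_s)\to\mcal{O}_V\to\mcal{O}_{V_s}\to 0$ for $s\in\{0,1\}$ together with the isomorphism $\mcal{O}_V(-V_0)\simeq\mcal{O}_V(-V_1)$ induced from $\mcal{O}_{\mbb{P}^1}(-\{0\})\simeq\mcal{O}_{\mbb{P}^1}(-\{1\})$ --- gives $[\mcal{O}_{V_0}]=[\mcal{O}_{V_1}]$ in $K_0(V)$. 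Transferring this equality to $G_0(W)$ via the proper morphism $V\to X$ (applying Proposition~\ref{prop:transfer} in a suitable form and chasing the dimension filtration on $G_0$) shows the lift is supported in dimension $\le k-1$, placing $i_0^*\beta-i_1^*\beta$ in $F_{k-1}K_0(X,D)$.

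The principal obstacle is this last step. The cleanest way to transport the absolute identity would be a proper pushforward on the pair $(X\times\mbb{P}^1,D\times\mbb{P}^1)$, but that pair falls outside the framework since $D\times\mbb{P}^1$ is not affine; the computation must instead be threaded through $(X\times\mbb{A}^1,D\times\mbb{A}^1)$ and the localization sequence, and one must exercise care because $V$ still meets $D\times\mbb{P}^1$ over $\infty$, obstructing a naive transfer of the identity from $K_0(V)$ into the relative setting.
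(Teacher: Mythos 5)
Your reduction to showing $\cyc(V_0)\equiv\cyc(V_1)\pmod{F_{k-1}K_0(X,D)}$ via Lemma \ref{lem:cycleclass} matches the paper, and the flatness/specialization setup over $\mbb{A}^1$ is fine, but the decisive step --- which you yourself flag as the principal obstacle --- contains a genuine gap, and the route you sketch cannot be repaired as stated. With $W=V_0\cup V_1$, the natural lift of $i_0^*\beta-i_1^*\beta$ to $G_0(W)$ is $[\mcal{O}_{V_0}]-[\mcal{O}_{V_1}]$, whose image in $F_kG_0(W)/F_{k-1}G_0(W)\simeq Z_k(W)$ is the cycle $[V_0]-[V_1]$; this is nonzero in general, so the lift is \emph{not} supported in dimension $\le k-1$. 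Moreover there is no map between $G_0(V)$ and $G_0(W)$ along which the Grothendieck relation $[\mcal{O}_{V_0}]=[\mcal{O}_{V_1}]\in G_0(V)$ could be transported: the proper pushforward lands in $G_0(\overline{q(V)})$, and $\overline{q(V)}$ meets $D$ (over $\infty$), so there is no map $G_0(\overline{q(V)})\to K_0(X,D)$ of the kind underlying Lemma \ref{lem:cycleclass}. The tell-tale sign is that your argument uses the modulus condition only through the set-theoretic consequence $V\cap(D\times\mbb{A}^1)=\emptyset$; if the factorization could be proved from that alone, the cycle class map would kill ordinary rational equivalence off $D$, which is exactly what the modulus condition exists to prevent.

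The paper closes precisely this gap by working on the normalization $V^N$, where the modulus condition lives. Writing $V'_t\subset V^N$ for the pullback of $V_t$, the exact sequences $0\to p_N^*\mcal{O}(-1)\xrightarrow{p_N^*j_t}\mcal{O}_{V^N}\to\mcal{O}_{V'_t}\to 0$ identify $[(\mcal{O}_{V'_t},0,0)]$ with $[(p_N^*\mcal{O}(-1),p_N^*j_t,\mcal{O}_{V^N})]$ in $K_0(V^N,q_N^*D)$, and the two classes for $t=0,1$ differ by $[(\mcal{O}_{V^N},p_N^*\theta,\mcal{O}_{V^N})]$, where $\theta$ is multiplication by $(1-t)/t$. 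The divisor inequality $\phi^*(D\times\mbb{P}^1)\le\phi^*(X\times\{\infty\})$ is exactly what forces $p_N^*\theta$ to restrict to the identity on $q_N^*D$, so this correction term vanishes and the identity $[(\mcal{O}_{V'_0},0,0)]=[(\mcal{O}_{V'_1},0,0)]$ holds already in $K_0(V^N,q_N^*D)$; it is then pushed to $K_0(X,D)$ by the proper transfer of Proposition \ref{prop:transfer} and compared with $\cyc(V_t)$ modulo $F_{k-1}$ via Lemma \ref{lem:multiplicity}. Any correct completion of your argument would have to inject the divisor inequality in an equally essential way.
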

\begin{proof}
Since $X$ is separated regular noetherian, it has an ample family of line bundles \cite[II 2.2.7.1]{SGA6}.
Hence, the assumption in \S\ref{coniveau} is satisfied, and by Lemma \ref{lem:cycleclass}, we have a surjective homomorphism
\[
	\cyc\colon Z_k(X|D) \twoheadrightarrow F_kK_0(X,D)/F_{k-1}K_0(X,D).
\]
We show this map factors through $\CH_k(X|D)$.

We have to show that, for all $V\in R_k(X|D)$, $\cyc(V_0)=\cyc(V_1)$ in $F_kK_0(X,D)/F_{k-1}K_0(X,D)$. 
Let $V\in R_k(X|D)$ and consider the diagram
\[
\xymatrix{
	V^N \ar[d]^\pi \ar@/^1pc/[rrd]^{q_N} \ar@/_3pc/[rdd]_{p_N} & 				 				& \\
	V \ar@{^{(}->}[r] \ar[rd]_p \ar@/^1pc/[rr]^q 			   & X\times\mbb{P}^1 \ar[d] \ar[r] & X \\
															   & \mbb{P}^1 					 	&
}
\]
where $\pi$ is the normalization and the other maps are the obvious ones.
We fix a parameter $t$ of $\mbb{P}^1\setminus\{\infty\}$.
Let $j_0\colon \mcal{O}(-1)\to \mcal{O}_{\mbb{P}^1}$ be the canonical inclusion (sending $t$ to $t$) and $j_1\colon \mcal{O}(-1)\to \mcal{O}_{\mbb{P}^1}$ the map sending $t$ to $t-1$.
Now, $V_t$ ($t=0,1$) are the closed subschemes of $V$ defined by $p^*j_t$, and these are regarded as closed subschemes of $X$ by the restriction of $q$.

By Theorem \ref{thm:relKscheme}, we identify $K_0(V^N,q_N^*D)$ with the $K_0$ of the triangulated functor $L\iota^{\prime *}\colon\msf{D}^\perf(V^N)\to\msf{D}^\perf(q_N^*D)$, where $\iota'\colon q_N^*D\hookrightarrow V^N$ is the canonical inclusion.
Let $V_t'$ be the scheme theoretic inverse image of $V_t$ by $\pi\colon V^N\to V$.
Then the triple $(\mcal{O}_{V_t'},0,0)$ gives an element of $\Rel(L\iota^{\prime *})$.

First, we show that
\[
	[(\mcal{O}_{V_0'},0,0)] = [(\mcal{O}_{V_1'},0,0)]
\]
in $K_0(V^N,q_N^*D)$.
Now, we have an exact sequence
\[
\xymatrix@1{
	0 \ar[r] & p_N^*\mcal{O}(-1) \ar[r]^-{p_N^*j_t} & \mcal{O}_{V^N} \ar[r] & \mcal{O}_{V_t'} \ar[r] & 0
}
\]
of $\mcal{O}_{V^N}$-modules.
Hence, the class of $(\mcal{O}_{V_t'},0,0)$ in $K_0(V^N,q^*D)$ is equal to the one of
\[
	(p_N^*\mcal{O}(-1), p_N^*j_t, \mcal{O}_{V^N}).
\]

Let $\theta$ be the multiplication of $(t-1)/t$, which is defined on $\mbb{P}^1\setminus\{0\}$ and an automorphism on $\mbb{P}^1\setminus\{0,1\}$;
$\theta$ fits into the commutative diagram
\[
\xymatrix@R-1pc{
											 & \mcal{O}_{\mbb{P}^1} \ar@{.>}[dd]^\theta \\
	\mcal{O}(-1) \ar[ru]^{j_0} \ar[rd]_{j_1} & \\
											 & \mcal{O}_{\mbb{P}^1}.
}
\]
It follows that
\[
	[(p_N^*\mcal{O}(-1),p_N^*j_0,\mcal{O}_{V^N})] + [(\mcal{O}_{V^N},p_N^*\theta,\mcal{O}_{V^N})]
	=[(p_N^*\mcal{O}(-1),p_N^*j_1,\mcal{O}_{V^N})].
\]
It is clear that the restriction of $p_N^*\theta$ on $q_N^*(\{\infty\})=\phi^*(X\times\{\infty\})$ is the identity.
Hence, by the modulus condition, the restriction of $p_N^*\theta$ on $\phi^*(D\times\mbb{P}^1)=q_N^*D$ is the identity.
This implies that the second term of the above equation is zero; in other words,
\[
	[(p_N^*\mcal{O}(-1),p_N^*j_0,\mcal{O}_{V^N})]=[(p_N^*\mcal{O}(-1),p_N^*j_1,\mcal{O}_{V^N})].
\]
This proves $[(\mcal{O}_{V_0'},0,0)] = [(\mcal{O}_{V_1'},0,0)]$ in $K_0(V^N,q_N^*D)$.

Now, $\iota\colon D\hookrightarrow X$ and $q_N\colon V^N\to X$ are Tor-independent, and $q_N$ is proper and perfect since $X$ is regular.
Hence, by Proposition \ref{prop:transfer}, we have a transfer map
\[
	q_{N*}\colon K_0(V^N,q_N^*D) \to K_0(X,D).
\]
Consequently, we have
\[
	[(q_{N*}\mcal{O}_{V_0'},0,0)] = [(q_{N*}\mcal{O}_{V_1'},0,0)]
\]
in $K_0(X,D)$.

Finally, we claim that
\[
	\cyc(V_t) \equiv [(\mcal{O}_{V_t},0,0)] \equiv [(q_{N*}\mcal{O}_{V_t'},0,0)]
\]
modulo $F_{k-1}K_0(X,D)$, which completes the proof.
The first term is $\sum_i m_i[(\mcal{O}_{V_{t,i}},0,0)]$ by definition, where $V_{t,i}$ are irreducible components of $V_t$ and $m_i$ are their multiplicity.
By Lemma \ref{lem:multiplicity} below, it suffices to compare the length of $\mcal{O}_{V_t}$ and $q_{N*}\mcal{O}_{V_t'}$ at the generic point of $V_{t,i}$.
This is clear because $V_t\hookrightarrow V$ and $V_t'\hookrightarrow V^N$ are defined by the same rational function.
\end{proof}

\begin{lemma}\label{lem:multiplicity}
Let $\mcal{F}$ be a coherent sheaf on $X$ whose support is of dimension $k$ and disjoint from $D$.
Then
\[
	[(\mcal{F},0,0)] = \sum_{\dim V=k}m_V(\mcal{F})[(\mcal{O}_V,0,0)]
\]
in $F_kK_0(X,D)/F_{k-1}K_0(X,D)$.
Here, $V$ runs over all integral closed subschemes of $X$ of dimension $k$ and $m_V(\mcal{F})$ is the length of the stalk of $\mcal{F}$ at the generic point of $V$.%as a module of the local ring of $X$ along $V$.
\end{lemma}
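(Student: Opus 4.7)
The strategy is a standard dévissage argument, reducing $[(\mcal{F},0,0)]$ to a sum of structure-sheaf classes via a filtration, and then isolating the top-dimensional part modulo $F_{k-1}K_0(X,D)$.

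First, I would invoke the standard fact that on a Noetherian scheme every coherent sheaf $\mcal{F}$ admits a filtration $0=\mcal{F}_0\subset\mcal{F}_1\subset\cdots\subset\mcal{F}_n=\mcal{F}$ by coherent subsheaves such that each successive quotient $\mcal{F}_i/\mcal{F}_{i-1}$ is isomorphic to $\mcal{O}_{V_i}$ for some integral closed subscheme $V_i$, and moreover $V_i\subset \mrm{Supp}(\mcal{F})$. In particular each $V_i$ is disjoint from $D$ and has dimension at most $k$. Since $X$ is regular, each $\mcal{F}_i$ is a perfect complex on $X$; since $\mrm{Supp}(\mcal{F}_i)\subset \mrm{Supp}(\mcal{F})$ is disjoint from $D$, the triple $(\mcal{F}_i,0,0)$ defines an element of $F_{\dim \mrm{Supp}(\mcal{F}_i)}K_0(X,D)$ (and in particular of $F_kK_0(X,D)$).

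Next I would use the short exact sequences of sheaves $0\to\mcal{F}_{i-1}\to\mcal{F}_i\to\mcal{O}_{V_i}\to 0$. Each such sequence corresponds to an exact triangle in $\msf{D}^\perf(X)$, and since the support of every term is disjoint from $D$, the induced triangle in $\msf{D}^\perf(D)$ after $L\iota^*$ is zero. Hence we obtain an exact triangle
\[
(\mcal{F}_{i-1},0,0)\to(\mcal{F}_i,0,0)\to(\mcal{O}_{V_i},0,0)\to(\mcal{F}_{i-1},0,0)[1]
\]
in $\Rel(L\iota^*)$, which yields in $K_0(X,D)$ the identity
\[
[(\mcal{F},0,0)]=\sum_{i=1}^{n}[(\mcal{O}_{V_i},0,0)].
\]

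The terms with $\dim V_i<k$ lie in $F_{k-1}K_0(X,D)$ by definition of the coniveau filtration, so they vanish modulo $F_{k-1}$. Grouping the remaining terms according to the integral closed subscheme $V$ they realize, we get
\[
[(\mcal{F},0,0)]\equiv\sum_{\dim V=k}\bigl(\#\{i:V_i=V\}\bigr)[(\mcal{O}_V,0,0)]\pmod{F_{k-1}K_0(X,D)}.
\]
Finally I would identify $\#\{i:V_i=V\}$ with $m_V(\mcal{F})$ by localizing the filtration at the generic point $\eta$ of $V$: for any $k$-dimensional integral closed subscheme $W\ne V$, the stalk $(\mcal{O}_W)_\eta$ vanishes, while $(\mcal{O}_V)_\eta=\kappa(\eta)$, so the induced filtration of $\mcal{F}_\eta$ is a composition series of length $\#\{i:V_i=V\}$, which is exactly $m_V(\mcal{F})$.

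No step is really an obstacle; the only point requiring care is verifying that the exact sequences of coherent sheaves lift to exact triangles in $\Rel(L\iota^*)$, which is where the disjointness of supports from $D$ and the regularity of $X$ are both used (the former to kill $L\iota^*$, the latter to ensure each $\mcal{F}_i$ is perfect).
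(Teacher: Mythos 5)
Your proposal is correct and is essentially the argument the paper has in mind: the paper routes the computation through $G_0(Z)$ for $Z$ the scheme-theoretic support (noting the map $G_0(Z)\to K_0(X,D)$, $[\mcal{G}]\mapsto[(j_*\mcal{G},0,0)]$, respects the coniveau filtrations) and then cites the same d\'evissage-by-filtration identity in $F_kG_0(Z)/F_{k-1}G_0(Z)$ that you carry out explicitly. Your version just performs the filtration argument directly in $K_0(X,D)$, correctly checking that the short exact sequences of sheaves supported away from $D$ give exact triangles in $\Rel(L\iota^*)$ and that the length count at generic points recovers $m_V(\mcal{F})$.
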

\begin{proof}
Let $j\colon Z\hookrightarrow X$ be the scheme theoretic support of $\mcal{F}$, i.e.\ $\mcal{F}=j_*\mcal{G}$ for some coherent module $\mcal{G}$ of $Z$.
The map $G_0(Z)\to K_0(X,D)$ sending $[\mcal{G}]$ to $[(j_*\mcal{G},0,0)]$ is compatible with the coniveau filtration.
Hence, it suffices to show that
\[
	[\mcal{G}] = \sum_{\dim V=k}m_V(\mcal{G})[\mcal{O}_V]
\]
in $F_kG_0(Z)/F_{k-1}G_0(Z)$.
This is easily verified by induction.
\end{proof}

\end{document}